\newcommand{\Ecal}{\mathcal{E}} 
\newcommand{\Tcal}{\mathcal{T}}  
\newcommand{\Scal}{\mathcal{S}} 
\newcommand{\Fcal}{\mathcal{F}}
\newcommand{\Pt}{\texttt{Pt}}
\newcommand{\topos}{\textsc{Topos}}
\newcommand{\Scott}{\mathbb{S}}
\theoremstyle{plain}
\newtheorem{theorem}{Theorem}[section]
\newtheorem{lemma}[theorem]{Lemma}
\newtheorem{prop}[theorem]{Proposition}
\newtheorem*{prop*}{Proposition}
\newtheorem{cor}[theorem]{Corollary}
\theoremstyle{definition}
\newtheorem{definition}[theorem]{Definition}
\theoremstyle{remark}
\newtheorem{npar}[theorem]{\unskip}
\begin{document}

\title{An abstract elementary class non-axiomatizable in $L_{\infty,\kappa}$.}
\author{Simon Henry}
\date{}
%\address{Masaryk university, Building 08 Kotl\'a\v{r}sk\'a 2, 611 37 Brno}

% \keywords{Toposes, points of toposes, Scott topology, Abstract elementary classes}
% \subjclass[2010]{18C35, 18C10, 03G30, 03C48}

%MSC classification:
% 18C35   	Accessible and locally presentable categories
% 18C10   	Theories (e.g. algebraic theories), structure, and semantics
% 03G30   	Categorical logic, topoi
% 03C48   	Abstract elementary classes and related topics

\maketitle

\begin{abstract}
We show that for any uncountable cardinal $\lambda$, the category of sets of cardinality at least $\lambda$ and monomorphisms between them cannot appear as the category of points of a topos, in particular is not the category of models of a $L_{\infty,\omega}$-theory. More generally we show that for any regular cardinal $\kappa < \lambda$ it is neither the category of $\kappa$-points of a $\kappa$-topos, in particular, not the category of models of a $L_{\infty,\kappa}$-theory.

The proof relies on the construction of a categorified version of the Scott topology, which constitute a left adjoint to the functor sending any topos to its category of points and the computation of this left adjoint evaluated on the category of sets of cardinality at least $\lambda$ and monomorphisms between them. The same techniques also applies to a few other categories. At least to the category of vector spaces of with bounded below dimension and the category of algebraic closed fields of fixed characteristic with bounded below transcendence degree.

\end{abstract}

%\tableofcontents

\renewcommand{\thefootnote}{\fnsymbol{footnote}} 
\footnotetext{\emph{Keywords.} Toposes, points of toposes, Scott topology, Abstract elementary classes}
\footnotetext{\emph{2010 Mathematics Subject Classification.} 18C35, 18C10, 03G30, 03C48}
\footnotetext{This work was supported by the Operational Programme Research, Development and Education Project ``Postdoc@MUNI'' (No. CZ.02.2.69/0.0/0.0/16\_027/0008360)}
%\footnotetext{\emph{email:} simon.henry@college-de-france.fr}
\renewcommand{\thefootnote}{\arabic{footnote}} 

\section{Introduction}

Determining which categories can be obtained as categories of points of a topos is in general a difficult question. In this paper we answer a question of Ji\v{r}\'i Rosick\'y, that he asked during his talk ``Towards categorical model theory'' at the 2014 category theory conference in Cambridge, and also mentioned in section 5 of \cite{beke2012abstract}:

\bigskip

\emph{Show that the category of uncountable sets and monomorphisms between cannot be obtained as the category of point of a topos. Or give an example of an abstract elementary class that does not arise as the category points of a topos.}

\bigskip

We will prove some more general claims along the same lines (~\cref{Cor:MainFinitary,Th:SetLambdaNotKappapoints}).

This question of whether a given category is the category of points of a topos should be of interest to model theorist for the following reason:

\begin{theorem}
A category is the category of points of a topos if and only if it is the category of models of geometric theory and morphisms of structures between them.
\end{theorem}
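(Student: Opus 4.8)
The plan is to reduce the statement to two standard facts about classifying toposes (see Johnstone's \emph{Sketches of an Elephant}, parts B4.2 and D3.1, or Mac Lane and Moerdijk's \emph{Sheaves in Geometry and Logic}). Recall that a \emph{point} of a topos $\Ecal$ is a geometric morphism $\mathbf{Set}\to\Ecal$, that these form a category $\Pt(\Ecal):=\mathrm{Geom}(\mathbf{Set},\Ecal)$ with geometric transformations as morphisms, and that for a geometric theory $\mathbb{T}$ one writes $\mathbb{T}\text{-}\mathbf{mod}(\Fcal)$ for the category of $\mathbb{T}$-models in a topos $\Fcal$ and homomorphisms of structures between them.

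For the ``if'' direction I would start from a geometric theory $\mathbb{T}$: since its signature and its set of axioms are sets, the syntactic category $\mathcal{C}_\mathbb{T}$ equipped with its geometric coverage $J_\mathbb{T}$ is a small site, so $\mathbf{Set}[\mathbb{T}]:=\mathbf{Sh}(\mathcal{C}_\mathbb{T},J_\mathbb{T})$ is a Grothendieck topos. Its defining universal property is a pseudonatural equivalence of $2$-functors $\mathrm{Geom}(-,\mathbf{Set}[\mathbb{T}])\simeq\mathbb{T}\text{-}\mathbf{mod}(-)$, and evaluating it at $\Fcal=\mathbf{Set}$ already yields an equivalence of categories $\Pt(\mathbf{Set}[\mathbb{T}])\simeq\mathbb{T}\text{-}\mathbf{mod}(\mathbf{Set})$; hence any category of models of a geometric theory is a category of points of a topos.

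For the ``only if'' direction I would, given a topos $\Ecal$, choose a small site $(\mathcal{C},J)$ with $\Ecal\simeq\mathbf{Sh}(\mathcal{C},J)$ and exhibit a geometric theory $\mathbb{T}_{(\mathcal{C},J)}$ classified by it. Over the signature with a sort for each object of $\mathcal{C}$ and a function symbol for each arrow, functoriality, flatness of a functor $\mathcal{C}\to\Fcal$ (filteredness of the relevant indexing categories) and $J$-continuity (each $J$-covering sieve being sent to an epimorphic family) are all expressible by geometric sequents, the continuity axioms being in general infinitary disjunctions --- which is precisely why geometric rather than merely coherent logic is needed here. Then Diaconescu's theorem identifies $\mathrm{Geom}(\Fcal,\mathbf{Sh}(\mathcal{C},J))$, pseudonaturally in $\Fcal$, with the category of $J$-continuous flat functors $\mathcal{C}\to\Fcal$, i.e.\ with $\mathbb{T}_{(\mathcal{C},J)}\text{-}\mathbf{mod}(\Fcal)$; so $\Ecal\simeq\mathbf{Set}[\mathbb{T}_{(\mathcal{C},J)}]$, and combining with the previous paragraph $\Pt(\Ecal)\simeq\mathbb{T}_{(\mathcal{C},J)}\text{-}\mathbf{mod}(\mathbf{Set})$.

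I expect the only genuinely non-formal step to be the passage from a site to a geometric theory it classifies --- writing down $\mathbb{T}_{(\mathcal{C},J)}$ and checking that flatness and $J$-continuity are geometric conditions, together with Diaconescu's theorem; everything else is bookkeeping with the $2$-categorical universal property. One point to be careful about throughout is that the equivalences above must be checked to match the $1$-cells: geometric transformations between points have to correspond to homomorphisms of $\mathbf{Set}$-models, which is what makes the two sides agree as categories and not merely on objects.
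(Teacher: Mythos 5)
Your proposal is correct and is exactly the standard argument (classifying topos of a geometric theory via its syntactic site in one direction, Diaconescu's theorem and the theory of $J$-continuous flat functors on a site in the other) that the paper itself does not reprove but simply cites from the literature (Mac Lane--Moerdijk, Johnstone, Borceux). Your attention to the matching of morphisms --- geometric transformations of points versus homomorphisms of $\mathbf{Set}$-models --- is the right point to be careful about, and it is handled as you describe in those references.
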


This is (a relatively weak form of) the very famous theorem that every geometric theory admits a classifying topos and that every topos is the classifying topos of such a theory. We refer the reader to any good topos theory book for a more detailed introduction to these ideas, for example \cite{maclane1992sheaves}, \cite{sketches} or \cite{borceux3}. 

A geometric theory, in a signature $\Sigma$ (including sort, functions and $n$-ary relations) is a theory whose axiom can be written in the form:
\[ \forall x_1,\dots,x_n, (P(x_1,\dots,x_n) \Rightarrow Q(x_1,\dots,x_n)) \]
where $P$ and $Q$ are ``geometric proposition'' i.e. they are  built from:

\begin{itemize}

\item True and False.

\item Atomic formulas, i.e. equality or relation applied to terms (which are either variables, or function in $\Sigma$ applied to variables)

\item Finite conjunction (``and'').

\item arbitrary disjunction (``or'').

\item existential quantification $\exists$.

\end{itemize}

The asymmetry between disjunction and conjunction and $\exists/ \forall$ in these theories comes from the fact that they are made to be studied in an intuitionist framework. But even in purely classical framework it plays a very important role in the theory.

Given a theory using axioms in $L_{\infty,\omega}$, i.e. axioms allowing both arbitrary (infinite) conjunction and disjonction. The process of ``\emph{Morleyisation}'' allows to turn it into a geometric theory, at the price of changing the signature (see for example D1.5.13 of \cite{sketches} for an explicit description of this construction for finitary first order logic, a description of a similar construction for $L_{\kappa^+,\kappa}$ can also be found in \cite{espindola2017infinitary}, this is also briefly alluded in section 3.4 of \cite{makkai1989accessible}).

Essentially, it consists in adding to the signature two new relation symbols for each relation definable in $L_{\infty,\omega}$ corresponding respectively to the proposition and its negation, as well as new axioms that only forces these new relations to be what they are meant to be.

As this requires changing the signature, it generally changes the morphisms. If we follow the ``maximal'' Morleyisation described above and add new relations for each definable proposition, this makes the morphisms exactly the elementary embeddings of the $L_{\infty,\omega}$-theory. Though depending on the theory, it might be possible to take only partial Morleyisation, i.e. only adding the symbol that are needed and to get other notion of morphisms.

As a consequence of this, a special case of J.Rosick\'y question is to show that the category of uncountable sets cannot be axiomatised in $L_{\infty,\omega}$. We will show more generally (using the notion of $\kappa$-topos) that:

\begin{theorem}
\label{Th:SetLamndaNotKappamodel}For any cardinal $\lambda$ and regular cardinal $\kappa < \lambda$, the category $Set^m_{\geqslant \lambda}$ of sets of cardinality at least $\lambda$ and monomorphisms between them is not equivalent to the category of models and elementary embeddings of a theory axiomatizable in $L_{\infty,\kappa}$.
\end{theorem}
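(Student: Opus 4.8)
The plan is to reduce the statement to a purely topos-theoretic one and then invoke the structural result \cref{Th:SetLambdaNotKappapoints}. The point is that, up to equivalence of categories, ``being the category of models and elementary embeddings of an $L_{\infty,\kappa}$-theory'' coincides with ``being the category of $\kappa$-points of a $\kappa$-topos''. In one direction, given a theory $T$ over a signature $\Sigma$ with axioms in $L_{\infty,\kappa}$, I would perform the maximal $\kappa$-ary Morleyisation: adjoin to $\Sigma$, for every $L_{\infty,\kappa}$-formula $\varphi(\vec x)$, new relation symbols $R_\varphi$ and $R_{\neg\varphi}$ together with ($\kappa$-geometric) axioms forcing them to be interpreted as $\varphi$ and its complement (as in D1.5.13 of \cite{sketches} in the finitary case, and in \cite{espindola2017infinitary} for the infinitary case). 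Since $\kappa$ is regular, the result is a $\kappa$-geometric theory; its classifying $\kappa$-topos has as category of $\kappa$-points the category of models of $T$, and since every formula of $L_{\infty,\kappa}$ is now positively expressible, the morphisms of this category are exactly the elementary embeddings of $T$. Conversely, every $\kappa$-topos is the classifying $\kappa$-topos of a $\kappa$-geometric theory, which is \emph{a fortiori} an $L_{\infty,\kappa}$-theory, and its $\kappa$-points with the natural transformations between them are the models and elementary embeddings.

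Granting this dictionary, it suffices to show that $Set^m_{\geqslant\lambda}$ is not equivalent to the category of $\kappa$-points of any $\kappa$-topos. This is exactly \cref{Th:SetLambdaNotKappapoints}, which one proves by exhibiting a left adjoint $\Scott$ --- a categorified version of the Scott topology --- to the functor $\Pt_\kappa$ sending a $\kappa$-topos to its category of $\kappa$-points, and then computing $\Scott$ on $Set^m_{\geqslant\lambda}$: one checks that the unit $Set^m_{\geqslant\lambda}\to \Pt_\kappa(\Scott(Set^m_{\geqslant\lambda}))$ fails to be an equivalence, which by the adjunction forbids $Set^m_{\geqslant\lambda}$ from lying in the essential image of $\Pt_\kappa$ at all. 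The $L_{\infty,\omega}$ statement about uncountable sets is the special case $\kappa=\omega$.

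The main obstacle is entirely on the topos side, i.e.\ in \cref{Th:SetLambdaNotKappapoints}: constructing $\Scott$ in the $\kappa$-ary setting requires being careful about the accessibility and smallness hypotheses under which $\Pt_\kappa$ is a right adjoint, and the crux is the explicit computation of $\Scott(Set^m_{\geqslant\lambda})$ together with the verification that the unit is not essentially surjective (or not full) --- this is where the hypothesis $\kappa<\lambda$, and for the $L_{\infty,\omega}$ form the uncountability of $\lambda$, actually enters. By comparison the Morleyisation reduction above is routine modulo infinitary-syntax bookkeeping; the only thing to double-check there is that regularity of $\kappa$ makes the $\kappa$-geometric fragment closed under the operations used and guarantees existence of the classifying $\kappa$-topos.
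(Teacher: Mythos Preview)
Your reduction of \cref{Th:SetLamndaNotKappamodel} to \cref{Th:SetLambdaNotKappapoints} via $\kappa$-ary Morleyisation is exactly what the paper does, and your description of that step is accurate (only the direction ``$L_{\infty,\kappa}$-models $\Rightarrow$ $\kappa$-points of a $\kappa$-topos'' is needed; your converse is irrelevant and, incidentally, not quite right as stated, since morphisms of models of a $\kappa$-geometric theory are homomorphisms, not elementary embeddings in general).

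Where you go wrong is in your sketch of \emph{why} \cref{Th:SetLambdaNotKappapoints} holds. You write that one checks the unit $Set^m_{\geqslant\lambda}\to \Pt_\kappa(\Scott_\kappa(Set^m_{\geqslant\lambda}))$ is not an equivalence, ``which by the adjunction forbids $Set^m_{\geqslant\lambda}$ from lying in the essential image of $\Pt_\kappa$ at all.'' This inference is invalid: for a general adjunction $L\dashv R$, failure of the unit at $A$ to be an equivalence does not prevent $A\simeq R(X)$ for some $X$. The paper says this explicitly: if the unit is not an equivalence, ``it might still be the case, as far as we know, that $A$ is the category of points of a different topos.'' What the adjunction \emph{does} give is that if $A\simeq \Pt_\kappa\Tcal$ then $A$ is a retract of $\Pt_\kappa\Scott_\kappa A$.

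The actual argument (carried out for $\kappa=\omega$ in the proof of \cref{Cor:MainFinitary} and stated to go through verbatim for general $\kappa$) is sharper: the key computation is not merely that the unit fails, but that the inclusion $Set^m_{\geqslant\lambda}\hookrightarrow Set^m_{\geqslant\kappa}$ induces an \emph{equivalence} $\Scott_\kappa Set^m_{\geqslant\lambda}\simeq \Scott_\kappa Set^m_{\geqslant\kappa}$ (\cref{Prop:SbkappaEquivOnSetLambda}). From a hypothetical $Set^m_{\geqslant\lambda}\simeq\Pt_\kappa\Tcal$ one then extracts a filtered-colimit-preserving functor $C:Set^m_{\geqslant\kappa}\to Set^m_{\geqslant\lambda}$ restricting to the identity on $Set^m_{\geqslant\lambda}$, and reaches a contradiction by choosing $A,B\subset\lambda$ of cardinality $\lambda$ with $|A\cap B|=\kappa$: functoriality forces a monomorphism from the set $C(A\cap B)$, of size $\geqslant\lambda$, into $C(A)\cap C(B)\cong A\cap B$, of size $\kappa<\lambda$. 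This combinatorial step is where $\kappa<\lambda$ is genuinely used, not in any ``unit not essentially surjective'' check.
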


The argument can also be adapted to show similarly that the category of $K$-vector spaces of dimension at least $\lambda$ and linear monomorphisms between them, as well as the category of algebraic closed fields of fixed characteristic that are of transcendence degree at least $\lambda$ over their prime field, cannot be obtained as models of such theories as well. This will be briefly discussed in \cref{sec:generalizations}.

\section[The Scott topos construction (Joint work with Ivan Di Liberti)]{The Scott topos construction (Joint work with Ivan Di Liberti\protect\footnote{The construction of the adjunction $\Scott \dashv \Pt$ presented in this section and some of its property comes from an unpublished joint work with Ivan Di Liberti. As he did not took part in the proof of the main result of this paper (\cref{Prop:SbSetLambdaConstant,Th:SetLambdaNotKappapoints}) he decided to not be included as an author of the present paper. His contribution on the topic will appear in his PhD thesis.})}
\label{section:finitaryScott}

\begin{npar}
The category of points of a topos is not arbitrary. If $\Tcal$ is a topos, then its category of points $\Pt(\Tcal)$ is always an accessible\footnote{We refer to \cite{adamek1994locally} for the general theory of accessible categories.} category, moreover it has all filtered colimits. Indeed, a point $p$ of $\Tcal$ is a functor:
\[ \Tcal \overset{p^*}{\rightarrow} Set \]
which preserve arbitrary colimits and finite limits, we take morphisms of points to be just natural transformations between such functors.

As filtered colimits commute to both finite limits and arbitrary colimits, a filtered colimits of points (in the category of functors) is again a point. In particular, if $f : \Tcal \rightarrow \Ecal$ is any geometric morphisms then the induced functor $\Pt(\Tcal) \overset{f}{\rightarrow} \Pt(\Ecal)$ preserves filtered colimits. This induces a functor:
\[ \Pt : \topos \rightarrow Acc_{\omega} \]
where $Acc_{\omega}$ is the category of accessible category admitting filtered colimits and functor preserving filtered colimits between them.
\end{npar}

\begin{npar}
We can construct a left adjoint to $\Pt$, that we denote $\Scott$. It is defined for $A \in Acc_{\omega}$ as:
\[ \Scott A = \{ F: A \rightarrow Set | F \text{ preserves filtered colimits } \} \]
morphisms being just the natural transformation.
\end{npar}

\begin{prop*}$\Scott A$ is a Grothendieck topos. \end{prop*}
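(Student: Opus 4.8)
The plan is to choose a convenient presentation of $A$ by $\kappa$-filtered colimits, to reduce $\Scott A$ to an explicitly described full subcategory of a presheaf category, and to recognise that subcategory as a Grothendieck topos via Giraud's theorem.

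First I would use that $A$ is accessible to pick an \emph{uncountable} regular cardinal $\kappa$ for which $A$ is $\kappa$-accessible, so that $C := A_\kappa$, the full subcategory of $\kappa$-presentable objects, is essentially small and $A \simeq \mathrm{Ind}_\kappa(C)$. Since $A$ has all filtered colimits and a $\kappa$-small colimit of $\kappa$-presentable objects is again $\kappa$-presentable, $C$ is closed under $\kappa$-small directed colimits computed in $A$; this is the one place where having \emph{all} filtered colimits, rather than just $\kappa$-filtered ones, is used. Restriction along $C \hookrightarrow A$ together with left Kan extension gives an equivalence between $[C, Set]$ and the category of functors $A \to Set$ preserving $\kappa$-filtered colimits, and under it $\Scott A$ is identified with the full subcategory $\mathcal{G} \subseteq [C, Set]$ of those $G$ whose extension $\mathrm{Lan}(G) : A \to Set$ preserves \emph{all} filtered colimits.

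The crucial step is to check that $\mathcal{G}$ is nothing but $\{\, G \in [C, Set] : G \text{ preserves the colimits of $\kappa$-small directed diagrams valued in } C \,\}$. One inclusion is immediate. For the other I would prove a decomposition lemma: for $\kappa$ uncountable and regular, every filtered colimit in $A$ can be rewritten as a $\kappa$-filtered colimit of colimits of $\kappa$-small directed diagrams valued in $C$. Concretely, write each object of the given diagram as a $\kappa$-filtered colimit of objects of $C$, form the Grothendieck construction (still filtered), replace it by a cofinal directed poset, and cover that poset by its $\kappa$-small directed sub-posets; these form a $\kappa$-filtered family precisely because $\kappa$ is regular and uncountable, so one can close a set of size $<\kappa$ under finite upper bounds in $\omega$ steps without reaching size $\kappa$. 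As $C$ is closed under $\kappa$-small directed colimits the inner colimits land in $C$, and the hypothesis on $G$ then forces $\mathrm{Lan}(G)$ to preserve the whole filtered colimit. I expect this cardinal bookkeeping to be the main obstacle; the rest is formal.

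It then remains to see that $\mathcal{G} \subseteq [C, Set]$ is a Grothendieck topos. It is closed under all small colimits (colimits commute with colimits) and under finite limits (finite limits commute with filtered, hence with directed, colimits in $Set$), so it is cocomplete and finitely complete with colimits and finite limits computed as in the presheaf topos; consequently it inherits from $[C, Set]$ the exactness clauses of Giraud's theorem — disjoint and universal coproducts, effective and universal quotients of equivalence relations — since each of these is a pullback-stable comparison between colimits, evaluated pointwise in $Set$. Moreover $\mathcal{G}$ is accessible: it is exactly the category of $Set$-models of the small sketch on $C$ whose distinguished cocones are the colimit cocones of the $\kappa$-small directed diagrams in $C$, and the categories of $Set$-models of small sketches are accessible \cite{adamek1994locally}. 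A cocomplete accessible category is locally presentable, so Giraud's theorem applies and $\mathcal{G} \simeq \Scott A$ is a Grothendieck topos. (If $A$ happens to be finitely accessible one may shortcut everything: with $\kappa = \omega$ one gets $\mathcal{G} = [A_\omega, Set]$, already a presheaf topos.)
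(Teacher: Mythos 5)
Your proof is correct and follows essentially the same route as the paper: reduce $\Scott A$ along the restriction/Kan-extension equivalence to a full subcategory of $[A_\kappa,Set]$ cut out by preservation of $\kappa$-small directed colimits, deduce accessibility from sketchability, and verify Giraud's exactness axioms objectwise. The decomposition lemma you spell out (rewriting an arbitrary filtered colimit as a $\kappa$-filtered colimit of $\kappa$-small directed colimits of $\kappa$-presentables, for $\kappa$ uncountable regular) is precisely the step the paper dispatches with ``easily seen,'' so your extra care there is welcome but not a different argument.
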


\begin{proof}
As finite limits and arbitrary colimits both commute to filtered colimits, $\Scott A$ has finite limits and arbitrary colimits, which are computed objectwise (i.e. in the category of all functors). In particular it clearly satisfies all of Giraud's limits/colimits axioms: coproduct are disjoint and pullback stable, congruences are effective with pullback stable quotient, as those can all be checked ``objectwise''. So it only remains to check that $\Scott A$ is an accessible category. 

If we fix some cardinal $\lambda$ such that $A$ is $\lambda$-accessible, and with $A_{\lambda}$ the category of $\lambda$-presentable objects of $A$ one has an equivalence:
\[ \Scott A \simeq \{ F: A_{\lambda} \rightarrow Set | F \text{ preserves $\lambda$-small $\omega$-filtered colimits}\} \]
which shows that $\Scott A$ is sketchable (i.e. it is written as a category of functors on a small category preserving to some colimits, see \cite[sec 2.F]{adamek1994locally}) hence accessible by \cite[Cor. 2.61]{adamek1994locally}. To prove the equivalence above, one observes that $A = Ind_{\lambda} A_{\lambda}$, hence the category of functors from $A$ to Sets preserving $\lambda$-directed colimits is equivalent to the category of all functors from $A_{\lambda}$ to Sets (by Kan extension in one direction and restriction in the other). 

Through that equivalence, the functor from $A_{\lambda}$ to Sets preserving $\lambda$-small $\omega$-filtered colimits corresponds exactly to the functor from $A$ to sets preserving $\lambda$-directed colimits and all $\lambda$-small $\omega$-filtered colimits of $\lambda$-presentable objects. But this second condition is easily seen to be equivalent to the preservation of all $\omega$-directed colimit.
\end{proof}

\begin{prop}
The functor $\Scott$ defines a left adjoint to $\Pt$:
\[ \Scott : Acc_{\omega} \rightleftarrows \topos : \Pt.\]
\end{prop}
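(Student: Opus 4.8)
The plan is to produce, for every $A \in Acc_{\omega}$ and every Grothendieck topos $\Tcal$, an equivalence of categories
\[ \topos(\Scott A, \Tcal) \;\simeq\; Acc_{\omega}(A, \Pt \Tcal) \]
natural in $A$ and in $\Tcal$. A family of such equivalences, natural in $\Tcal$, exhibits $\Scott A$ as a representing object for $\Tcal \mapsto Acc_{\omega}(A,\Pt\Tcal)$; naturality in $A$ then makes $A \mapsto \Scott A$ a functor, and the two together are exactly the statement $\Scott \dashv \Pt$, with no need to exhibit unit, counit or triangle identities separately.

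Next I would unwind the left-hand side through a ``middle category'' of bifunctors. Since $\Scott A$ is a Grothendieck topos by the previous proposition and $\Tcal$ is one by hypothesis, both are locally presentable, so every cocontinuous functor $\Tcal \to \Scott A$ automatically has a right adjoint (adjoint functor theorem for locally presentable categories); hence a geometric morphism $\Scott A \to \Tcal$ amounts to its inverse image part, a functor $G \colon \Tcal \to \Scott A$ preserving finite limits and all colimits. Now recall that $\Scott A$ is a full subcategory of $[A, Set]$ in which finite limits and arbitrary colimits are computed pointwise, and that $\Pt \Tcal$ is a full subcategory of $[\Tcal, Set]$ in which filtered colimits are computed pointwise (both recorded above in this section). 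By currying, such a $G$ is the same as a functor $H \colon \Tcal \times A \to Set$ with the properties: (a) for every $a \in A$ the functor $H(-,a) \colon \Tcal \to Set$ preserves finite limits and all colimits, i.e. is a point of $\Tcal$; and (b) for every $t \in \Tcal$ the functor $H(t,-) \colon A \to Set$ preserves filtered colimits --- indeed (b) is precisely the requirement that $G$ take values in $\Scott A \subseteq [A,Set]$, and, granting that, (a) is the left-exactness and cocontinuity of $G$ read off pointwise. Writing $\mathcal{M} \subseteq [\Tcal\times A, Set]$ for the full subcategory of $H$ satisfying (a) and (b), we have shown $\topos(\Scott A, \Tcal) \simeq \mathcal{M}$.

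Finally I would read $\mathcal{M}$ through the other currying: a functor $H \colon \Tcal \times A \to Set$ corresponds to a functor $K \colon A \to [\Tcal, Set]$; condition (a) says exactly that $K$ factors through $\Pt\Tcal$, and, since filtered colimits in $\Pt\Tcal$ are pointwise, condition (b) says exactly that $K \colon A \to \Pt\Tcal$ preserves filtered colimits, i.e. is a morphism of $Acc_{\omega}$. Hence $\mathcal{M} \simeq Acc_{\omega}(A,\Pt\Tcal)$, and composing with the previous equivalence gives the desired one; its naturality in $A$ and $\Tcal$ is immediate, being built from currying together with pre- and post-composition. The one step demanding care is the symmetry in the middle: one must check that the single condition ``$H$ preserves filtered colimits in the $A$-variable'' surfaces on the topos side as ``$G$ lands in $\Scott A$'' and on the accessible side as ``$K$ preserves filtered colimits'', and dually that ``$H$ is a point in the $\Tcal$-variable'' is the exactness-plus-cocontinuity of $G$ on one side and the factorization of $K$ through $\Pt\Tcal$ on the other. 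This verification rests entirely on the two pointwise-computation facts already in hand --- finite limits and colimits in $\Scott A$, filtered colimits in $\Pt\Tcal$ --- while the adjoint functor theorem input is routine given local presentability and serves only to replace a geometric morphism by its inverse image functor.
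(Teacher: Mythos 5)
Your argument is correct, but it proves the adjunction by a genuinely different decomposition than the paper. The paper constructs the unit and counit explicitly --- the counit $\Scott \Pt \Tcal \rightarrow \Tcal$ from $X \mapsto (p \mapsto p^*X)$, the unit $A \rightarrow \Pt \Scott A$ from evaluation at objects of $A$ --- and then asserts that the triangle identities hold. You instead establish the hom-equivalence $\topos(\Scott A, \Tcal) \simeq Acc_{\omega}(A, \Pt \Tcal)$ directly, by exhibiting both sides as the same category of bifunctors $H \colon \Tcal \times A \rightarrow Set$ that are a point in the $\Tcal$-variable and filtered-colimit-preserving in the $A$-variable. The two currying steps rest on exactly the facts the paper has already recorded (finite limits and all colimits in $\Scott A$, and filtered colimits in $\Pt\Tcal$, are computed pointwise), and since the fully faithful inclusions into the ambient functor categories then preserve and reflect the relevant (co)limits, the verification goes through; the replacement of a geometric morphism by its inverse image via the adjoint functor theorem is also legitimate because both toposes are locally presentable. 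What your route buys is that the symmetry is manifest and no triangle identities need checking; what the paper's route buys is an explicit unit $A \rightarrow \Pt\Scott A$, which is what is actually used later (in the proof of \cref{Cor:MainFinitary}) --- though it is of course recoverable from your equivalence as the transpose of the identity on $\Scott A$, and unwinding your bijection shows it is precisely the evaluation functor. The only things you leave implicit are the same ones the paper does: the $2$-categorical bookkeeping (the adjunction is really a biadjunction, naturality should be pseudonaturality) and the orientation convention for $2$-cells between geometric morphisms; neither affects the argument.
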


\begin{proof}
 Let $\Tcal$ be a topos, and $X \in \Tcal$ be any object. Then $p \mapsto p^* X$ gives a filtered colimit preserving functor $\Pt(\Tcal) \rightarrow Set$ This defines a functor $\Tcal \rightarrow \Scott (\Pt(\Tcal))$ which clearly commutes to all colimits and finite limits. Hence it corresponds to a geometric morphisms $\Scott (\Pt(\Tcal)) \rightarrow \Tcal$.  Given $A$ an accessible category with all filtered colimits, and $a \in A$ an object, the functor of evaluation at $a$ gives a points of the topos $\Scott A$, and this is produces a functor $A \rightarrow \Pt( \Scott A)$ preserving all filtered colimits. It is then easy to check that these two functors are natural in $A$ and $\Tcal$ and satisfy the usual relations to be the co-unit and unit of an adjunction between $\Scott $ and $\Tcal$.
\end{proof}

As suggested by the title of this section, the letter ``$\Scott $'' is for ``Scott'', and we refer to ``$\Scott A$'' as the ``Scott topos of $A$''. The reason is that this construction is a categorification of the usual Scott topology on a directed complete poset: 

\begin{definition}
Given a poset $P$ with directed suprema, a Scott open subsets is a subset  $U \subset P$ such that if $u = \bigcup u_i$ is a directed supremum and $u \in U$, then $\exists i , u_i \in I$.
\end{definition}

Scott open subsets forms a topology on $P$, called the Scott topology on $P$. The definition of a Scott open can be rewritten as a function:
\[ P \rightarrow \Omega = \{ \bot, \top \} \]
which is non-decreasing and preserves directed supremums. A poset with all directed supremums is in particular an accessible category with all filtered colimits, and this description of Scott open subsets identifies them with the subterminal objects of our Scott topos $\Scott P$. Indeed, the terminal object of $\Scott P$ is the functor $P \rightarrow Sets$ constant equal to $\{*\}$, so a subobject is a functor sending each $p \in P$ to either $\{*\}$ or $\emptyset$ and preserving directed colimits, i.e. a non-decreasing function $P\rightarrow \Omega$ preserving directed suprema:

\begin{prop}
The frame of Scott open of a poset $P$ with directed suprema is the localic reflection of the Scott topos $\Scott P$.
\end{prop}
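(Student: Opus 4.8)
The plan is to reduce the statement to the identification of a single frame. Recall that the localic reflection of a Grothendieck topos $\Tcal$ — the reflection of $\Tcal$ into the subcategory of localic toposes, equivalently the localic part of its hyperconnected–localic factorization over $Set$ — is the topos $Sh(\mathcal{O})$ of sheaves on the locale whose frame of opens is the frame $Sub_{\Tcal}(1)$ of subterminal objects of $\Tcal$. Hence it suffices to prove that, for a poset $P$ with directed suprema, there is an isomorphism of frames between $Sub_{\Scott P}(1)$ and the frame $\mathcal{O}_{\mathrm{Scott}}(P)$ of Scott opens of $P$.

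The underlying bijection is exactly the one spelled out just before the statement: the terminal object of $\Scott P$ is the functor $P \rightarrow Set$ constant at $\{*\}$, so a subobject of it is a functor $P \rightarrow Set$ landing in $\{\emptyset, \{*\}\}$ that preserves directed colimits, and such a functor is the same datum as a monotone map $P \rightarrow \Omega = \{\bot,\top\}$ preserving directed suprema, i.e. (the characteristic function of) a Scott open subset of $P$. What remains is to check that this bijection is a morphism of frames, i.e. that it carries the finite-meet and infinitary-join structure of $Sub_{\Scott P}(1)$ to finite intersection and arbitrary union of Scott opens.

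For this I would invoke the description of the topos structure of $\Scott P$ used in the proof that $\Scott A$ is a Grothendieck topos: finite limits and arbitrary colimits in $\Scott P$ are computed objectwise (pointwise in $p \in P$), and consequently the $(\text{regular epi},\text{mono})$ factorizations used to form joins of subobjects are also pointwise. Therefore finite meets of subterminals, being finite limits, are computed pointwise and correspond to intersections of Scott opens; and an arbitrary join $\bigvee_i U_i$ of subterminals, being the image of $\coprod_i U_i \rightarrow 1$, is computed pointwise as well, so at $p$ it equals $\{*\}$ precisely when some $U_i$ does, i.e. it corresponds to the union $\bigcup_i U_i$ of the associated Scott opens (which is again Scott open). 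Hence the bijection is a frame isomorphism, and the proposition follows.

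There is no serious obstacle; the one point needing a little care is the claim that coproducts and image factorizations in $\Scott P$ are genuinely computed pointwise, which is a consequence of the fact — established in the previous proof — that $\Scott P$ sits inside the functor category $[P,Set]$ closed under all colimits and finite limits, so that these operations, together with the epi/mono factorizations built from them, agree with the pointwise ones.
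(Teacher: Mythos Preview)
Your proposal is correct and follows exactly the approach the paper takes: the paper does not give a separate proof of this proposition but treats it as the immediate consequence of the identification, spelled out in the preceding paragraph, of subterminal objects of $\Scott P$ with Scott opens of $P$. Your additional verification that finite meets and arbitrary joins of subterminals are computed pointwise (hence correspond to intersections and unions of Scott opens) simply makes explicit what the paper leaves to the reader.
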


It is not clear to us if $\Scott P$ is in general a localic topos when $P$ is a poset, i.e. if the geometric morphism $\Scott P \rightarrow Sh(\Scal P)$ is always an equivalence. In practice it seems to be quite often an equivalence, but it also seems unlikely to be true in general.

The general properties of this adjunction are still unclear at this point, so I will not discuss them further. This might be the object of a future work. I'll just mention that the unit of adjunction $A \rightarrow \Pt \Scott A$ is not always faithful:

\begin{prop}
The functor $A \rightarrow \Pt \Scott A$ is faithful if and only if $A$ admits a faithful functor to the category of sets which preserves filtered colimits.
\end{prop}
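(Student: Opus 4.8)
The plan is to prove the two implications separately; the backward direction is immediate and the forward direction is where the content lies. Throughout, for $a \in A$ write $\mathrm{ev}_a$ for the point of $\Scott A$ given by evaluation at $a$, so that the unit $A \to \Pt(\Scott A)$ sends $a$ to $\mathrm{ev}_a$ and sends $f\colon a \to b$ to the natural transformation $\mathrm{ev}_a \Rightarrow \mathrm{ev}_b$ whose component at $G \in \Scott A$ is $G(f)$.

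Suppose first that $A$ admits a faithful functor $F_0\colon A \to Set$ preserving filtered colimits. Then $F_0$ is an object of $\Scott A$, and for any parallel pair $f \neq g\colon a \to b$ in $A$ the two natural transformations $\mathrm{ev}_a \Rightarrow \mathrm{ev}_b$ induced by $f$ and $g$ already differ at the component $G = F_0$, since $F_0(f) \neq F_0(g)$ by faithfulness of $F_0$. Hence the unit $A \to \Pt(\Scott A)$ is faithful.

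Conversely, suppose $A \to \Pt(\Scott A)$ is faithful. Although $\Scott A$ may be large, it is a Grothendieck topos, hence accessible: fix a regular cardinal $\mu$ with $\Scott A$ $\mu$-accessible and let $S$ be a set of representatives of the isomorphism classes of $\mu$-presentable objects of $\Scott A$, so that every object of $\Scott A$ is a $\mu$-filtered colimit of members of $S$. The key claim is: \emph{if some $F \in \Scott A$ separates a parallel pair $f\neq g$ in $A$} (meaning $F(f) \neq F(g)$), \emph{then some member of $S$ already does}. To see this, write $F = \operatorname{colim}_\alpha G_\alpha$ as a $\mu$-filtered (in particular filtered) colimit with each $G_\alpha \in S$; since colimits in $\Scott A$ are computed objectwise, $F(a) = \operatorname{colim}_\alpha G_\alpha(a)$ and similarly at $b$, with $F(f), F(g)$ the induced maps. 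Pick $z \in F(a)$ with $F(f)(z) \neq F(g)(z)$; as this is a filtered colimit of sets, $z$ is the image of some $z_\alpha \in G_\alpha(a)$ under the cocone map $\lambda_\alpha^a$, and naturality of $\lambda_\alpha\colon G_\alpha \to F$ gives $F(f)(z) = \lambda_\alpha^b(G_\alpha(f)(z_\alpha))$ and $F(g)(z) = \lambda_\alpha^b(G_\alpha(g)(z_\alpha))$. Since these are distinct, $G_\alpha(f)(z_\alpha) \neq G_\alpha(g)(z_\alpha)$, so $G_\alpha \in S$ separates $f \neq g$. Granting this, faithfulness of $A \to \Pt(\Scott A)$ says that for every $f\neq g$ in $A$ we have $G(f)\neq G(g)$ for some $G \in \Scott A$, hence for some $G \in S$; thus $S$ is jointly faithful, and $U = \coprod_{G \in S} G\colon A \to Set$ (the coproduct taken in $\Scott A$, equivalently objectwise) is faithful — for $f\neq g$ some summand separates them, so $U(f)\neq U(g)$ — and preserves filtered colimits, since each $G$ does and small coproducts commute with filtered colimits in $Set$. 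This $U$ is the required functor.

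The only non-formal ingredient — and the main obstacle — is the key claim: it is what lets one replace a possibly proper class of separating functors by a single set of them, and it relies essentially on the accessibility of the topos $\Scott A$ rather than on that of $A$ itself. One should also note that the final assembly must use a coproduct, not a product, since infinite products do not commute with filtered colimits in $Set$; with coproducts the argument goes through.
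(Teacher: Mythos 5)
Your proof is correct. The backward direction coincides with the paper's. For the forward direction the paper and you both face the same issue --- faithfulness of the unit only gives, for each parallel pair $f\neq g$, \emph{some} object of $\Scott A$ separating them, and one must replace this possibly proper class of witnesses by a single filtered-colimit-preserving functor --- but you resolve it differently. The paper evaluates at a bound of the topos (the coproduct of a generating family coming from a small site): since every object receives an epimorphism from a coproduct of generators and epimorphisms in $\Scott A$ are objectwise surjections, any separation witnessed by some $G$ is already witnessed by the bound. You instead invoke accessibility of the Grothendieck topos $\Scott A$, descend a separating element along a $\mu$-filtered colimit presentation to a $\mu$-presentable stage, and then take the coproduct over a set of representatives of $\mu$-presentable objects. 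Both arguments rest on the same two facts established earlier in the paper (colimits in $\Scott A$ are computed objectwise, and $\Scott A$ is a Grothendieck topos, hence has a small generating set, respectively is accessible); the paper's route is shorter, while yours is more self-contained in that it does not appeal to the notion of a bound or to a site of definition. Your closing remark that one must use a coproduct rather than a product is well taken and applies equally to the paper's choice of bound.
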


See example $4.14$ in \cite{beke2012abstract} for an example of an accessible category with directed colimits with no such faithful functor to the category of sets, and hence for which the unit of adjunction is not faithful.

\begin{proof}
If $A \rightarrow \Pt \Scott A$ is faithful then evaluation at a bound\footnote{For example the coproduct of representable sheaves for any small site of definition of the topos} of the topos $\Scott A$ produces such a faithful functor, and conversely if there is such a faithful functor, then it gives a single object of the topos $\Scott A$ such that evaluation at this object induces a faithful composite $A \rightarrow \Pt \Scott A \rightarrow Set $ functor, which shows that $A \rightarrow \Pt \Scott A$ is faithful.
\end{proof}

\begin{npar}
Given an accessible category $A$ with directed colimits, a good place to start if one wants to know whether $A$ is the category of points of a topos is to compute $\Scott A$. Indeed the adjunction $\Scott \dashv \Pt$, means that the data of a functor $F:A \rightarrow \Pt (\Tcal)$ preserving filtered colimits is the same as a geometric morphism $F':\Scott(A) \rightarrow \Tcal$. Moreover one recover $F$ from $F'$ as the composite:
 \[ A \rightarrow \Pt(\Scott(A)) \overset{\Pt(F)}{\rightarrow} \Pt(\Tcal). \]
In practice, it seems to happen quite often that $A \simeq Pt \Scott A$ in which case the problem is solved. If $A \rightarrow \Pt \Scott A$ is not an equivalence, it might still be the case, as far as we know, that $A$ is the category of point of a different topos. But this impose serious restriction: for example if $A = \Pt \Tcal$, it must be the case that $A$ is a retract of $\Pt \Scott A$ with the retraction preserving directed colimits. This can either give hints on what $\Tcal$ should be or produce a proof that $A$ is not the category of point of a topos. Our main result in this paper is an application of this idea:

\end{npar}

Let $\lambda$ be any cardinal, and let $Set^m_{\geqslant \lambda}$ be the category of sets of cardinality at least $\lambda$ and monomorphisms between them. In \cref{sec:computing}, we will show that:

\begin{prop}
\label{Prop:SbSetLambdaConstant}For any $\lambda \geqslant \omega$ the inclusion $Set^m_{\geqslant \lambda} \subset Set^m_{\geqslant \omega}$ induces an equivalence:
\[ \Scott Set^m_{\geqslant \lambda} \simeq \Scott Set^m_{\geqslant \omega} \]
\end{prop}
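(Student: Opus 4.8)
The plan is to prove that the geometric morphism $\Scott i$ induced by the full inclusion $i : Set^m_{\geqslant \lambda} \hookrightarrow Set^m_{\geqslant \omega}$ is an equivalence of toposes, i.e.\ that its inverse image functor
\[ i^{*} : \Scott Set^m_{\geqslant \omega} \longrightarrow \Scott Set^m_{\geqslant \lambda}, \qquad F \longmapsto F \circ i, \]
which is restriction along $i$, is an equivalence of categories. The first, routine, point is that this functor is well defined: filtered colimits in $Set^m_{\geqslant \lambda}$ and in $Set^m_{\geqslant \omega}$ are both computed as in $Set$ --- a filtered colimit of monomorphisms of sets is the union of the images, with all structure maps again monomorphisms --- and since a union of sets of cardinality $\geqslant \lambda$ again has cardinality $\geqslant \lambda$, the inclusion $i$ exhibits $Set^m_{\geqslant \lambda}$ as a full subcategory of $Set^m_{\geqslant \omega}$ closed under filtered colimits, hence $i$ preserves them and $F \circ i$ preserves filtered colimits whenever $F$ does. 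It then remains to show that $i^{*}$ is essentially surjective and fully faithful, i.e.\ that every filtered-colimit-preserving $G : Set^m_{\geqslant \lambda} \to Set$ extends to such a functor on $Set^m_{\geqslant \omega}$, and that natural transformations between filtered-colimit-preserving functors on $Set^m_{\geqslant \omega}$ are determined by, and freely so, their restrictions along $i$.

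I would first localise the problem using the presentation of the Scott topos recalled in the proof that $\Scott A$ is a Grothendieck topos. Fixing a regular cardinal $\kappa > \lambda$, both $Set^m_{\geqslant \omega}$ and $Set^m_{\geqslant \lambda}$ are $\kappa$-accessible, with $\kappa$-presentable objects respectively the infinite sets of cardinality $< \kappa$ and the sets of cardinality in $[\lambda,\kappa)$; so $\Scott Set^m_{\geqslant \omega}$ is identified with the category of functors on the infinite sets of cardinality $< \kappa$ preserving $\kappa$-small $\omega$-filtered colimits, $\Scott Set^m_{\geqslant \lambda}$ with the analogous category on the sets of cardinality in $[\lambda,\kappa)$, and $i^{*}$ with restriction along the full inclusion of the latter category into the former. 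The only difference between the two index categories is the presence of the sets $X$ with $\omega \leqslant |X| < \lambda$, so the entire content of the proposition is to understand the behaviour of a filtered-colimit-preserving functor on such small sets, and on the monomorphisms among them and from them into larger sets, in terms of its behaviour on sets of cardinality $\geqslant \lambda$.

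This is the heart of the matter and, I expect, the main obstacle, and the reason the statement is not a routine density argument. The naive approach fails outright: a set $X$ with $|X| < \lambda$ is \emph{not} a filtered colimit in $Set^m_{\geqslant \omega}$ of sets of cardinality $\geqslant \lambda$ --- a filtered colimit there is a union, whose cardinality is at least that of every term --- so there is no left Kan extension formula presenting the value at $X$ of the sought extension as a colimit of values of $G$; and the comma categories over which one would otherwise try to take a limit or colimit are never filtered, since a nontrivial pair of parallel monomorphisms into an object can never be coequalised by a further monomorphism. What I would do instead is establish a concrete enough description of the filtered-colimit-preserving functors $Set^m_{\geqslant \omega} \to Set$ --- this is exactly the computation carried out in \cref{sec:computing} --- from which it becomes visible that such a functor, although its value at a small set $X$ is forced by no single filtered colimit, is nevertheless completely determined on the small sets, and on the morphisms involving them, by its restriction to the sets of cardinality $\geqslant \lambda$. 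This description should simultaneously provide the extension $\widehat G$ of a given $G$, so that $i^{*}$ is essentially surjective, and the full faithfulness of $i^{*}$; it then remains to verify that $\widehat G$ preserves all filtered colimits --- only colimits one of whose terms, or whose colimiting object, has cardinality $< \lambda$ need argument, the rest being handled by $G$ --- and that $\widehat{F \circ i} \cong F$ naturally. The genuine work, and the place where the specific combinatorics of $Set^m$ enter, is obtaining that structural description; the cardinal bookkeeping around the small sets is the delicate part of it.
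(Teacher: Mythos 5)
Your setup is correct as far as it goes: restriction along $i$ is well defined because $Set^m_{\geqslant \lambda}$ is closed under filtered colimits in $Set^m_{\geqslant \omega}$, the reduction to $\kappa$-presentable objects is sound, and you rightly diagnose that no Kan-extension formula can produce the value of the sought extension at a set of cardinality $<\lambda$, so that some structural description of the filtered-colimit-preserving functors is unavoidable. But that description is precisely where the proof lives, and your proposal stops exactly there: you assert that such a functor is ``completely determined on the small sets \dots by its restriction to the sets of cardinality $\geqslant \lambda$'' without giving any argument, and you explicitly defer ``the genuine work''. As it stands this is a plan, not a proof.

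The missing content is the following. The paper introduces, for each finite $V$, the object $R_V=\{\text{monomorphisms } V\to -\}$ of $\Scott Set^m_{\geqslant \lambda}$, proves a Yoneda-type description of $Hom(R_V,\Fcal)$ in terms of elements of $\Fcal(\lambda)$ supported on a finite subset (\cref{lemma:morphismOutOfR_V}), and then proves the key fact that \emph{every} element of $\Fcal(X)$ has finite support (\cref{finitesupport}). This last step is genuinely nontrivial and is where the combinatorics enter: since $X$ is not a filtered colimit of finite sets inside the category at hand, the paper instead uses the filtered colimit $X\amalg X=\operatorname{colim}_K (X\amalg K)$ over finite $K\subset X$, an automorphism trick moving the support into one summand (\cref{Lemma1}), and a lemma showing that two supports whose union is $X$ and whose intersection is finite force support in the intersection (\cref{Lemma2}). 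From finite supports one deduces that the $R_V$ form a fully faithful dense family carrying the atomic topology, whence $\Scott Set^m_{\geqslant \lambda}\simeq Sh((Set^m_{<\omega})^{op},J_{at})$ compatibly with restriction along $i$ (\cref{Cor:Sec3main}); this is the proposition. None of these steps appears in your proposal, so you need either to supply them or to give an alternative argument of comparable substance for the claim that values on small sets are determined by values on large ones.
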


Moreover, we will see that $\Scott Set^m_{\geqslant \omega}$ identifies with the Schanuel topos $Sh(Set^m_{< \omega},J_{at})$ (see \cref{Cor:Sec3main}). It is well known that the Schanuel topos is the classifying topos for the theory of infinite decidable sets, so in this case the canonical map:
\[ Set^m_{\geqslant \omega} \overset{\simeq}{\rightarrow} \Pt \Scott Set^m_{\geqslant \omega} \]
is an equivalence.

\begin{cor}
\label{Cor:MainFinitary}For any uncountable cardinal $\lambda$, the abstract elementary class $Set^m_{\geqslant \lambda}$ is not equivalent to the category of points of a topos, in particular it is not equivalent to the category of models and elementary embeddings of a theory axiomatizable in $L_{\infty,\omega}$.
\end{cor}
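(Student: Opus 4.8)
The plan is to argue by contradiction, following the strategy outlined above. If $Set^m_{\geqslant\lambda}$ were the category of points of a topos $\Tcal$, then, by the triangle identities for the adjunction $\Scott\dashv\Pt$, the unit $\eta : Set^m_{\geqslant\lambda}\to\Pt\Scott Set^m_{\geqslant\lambda}$ would be a split monomorphism in $Acc_{\omega}$, a retraction being obtained from $\Pt$ of the counit $\Scott\Pt\Tcal\to\Tcal$. The first task is to make this concrete. By \cref{Prop:SbSetLambdaConstant} and the identification of $\Scott Set^m_{\geqslant\omega}$ with the Schanuel topos (so that $\Pt\Scott Set^m_{\geqslant\omega}\simeq Set^m_{\geqslant\omega}$, cf.\ \cref{Cor:Sec3main}), we get $\Pt\Scott Set^m_{\geqslant\lambda}\simeq Set^m_{\geqslant\omega}$; and, running the naturality square of $\eta$ along the inclusion $\iota:Set^m_{\geqslant\lambda}\hookrightarrow Set^m_{\geqslant\omega}$ through these equivalences, one checks that $\eta$ is identified with $\iota$ itself. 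So the problem reduces to showing that, for $\lambda$ uncountable, there is \emph{no} filtered-colimit-preserving functor $r:Set^m_{\geqslant\omega}\to Set^m_{\geqslant\lambda}$ equipped with a natural isomorphism $r\circ\iota\cong\mathrm{id}$.

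To get a contradiction from such an $r$, I would pass to the endofunctor $e:=\iota\circ r$ of $Set^m_{\geqslant\omega}$, which preserves filtered colimits and carries a natural isomorphism $\theta:e\circ\iota\cong\iota$, so that $\theta_X:e(X)\xrightarrow{\sim}X$ whenever $|X|\geqslant\lambda$. The key cardinality observation is that, since $\iota$ is the inclusion, $e(C)=r(C)$ lies in $Set^m_{\geqslant\lambda}$, hence $|e(C)|\geqslant\lambda$, for \emph{every} countably infinite set $C$. Next I would fix $X$ with $|X|=\lambda$ and write it, in $Set^m_{\geqslant\omega}$, as the directed (hence filtered) colimit of its countably infinite subsets $C$, the structure maps being the inclusions $C\hookrightarrow X$. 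Applying $e$ then yields $e(X)=\mathrm{colim}_C\,e(C)$ with colimit maps $\bar\iota_C:=e(C\hookrightarrow X):e(C)\to e(X)$, each of which is injective since every morphism of $Set^m_{\geqslant\omega}$ is.

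The heart of the argument — and the step I expect to require genuine work rather than bookkeeping — is the following symmetry lemma: \emph{if $z\in e(X)$ belongs to the image of $\bar\iota_C$, then $\theta_X(z)\in C$.} To prove it, let $G=\mathrm{Sym}(X)$ act on $e(X)$ via $e$ applied to automorphisms of $X$; naturality of $\theta$ at such automorphisms shows that $\theta_X$ is an isomorphism of $G$-sets from $e(X)$ onto the tautological $G$-set $X$, so the stabilizer of $z$ equals the stabilizer of $x:=\theta_X(z)$, namely $\{g\in G:g(x)=x\}$. On the other hand, any $g\in G$ fixing $C$ pointwise satisfies $g\circ(C\hookrightarrow X)=(C\hookrightarrow X)$, hence $e(g)\circ\bar\iota_C=\bar\iota_C$, hence $e(g)$ fixes every point in the image of $\bar\iota_C$, in particular $z$; so the pointwise stabilizer of $C$ in $G$ is contained in $\{g:g(x)=x\}$. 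If $x$ were not in $C$ this would be impossible: since $\lambda$ is uncountable, $|X\setminus C|=\lambda\geqslant 2$, so a transposition of $x$ with a second point of $X\setminus C$ fixes $C$ pointwise while moving $x$. Hence $x\in C$, as claimed.

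Granting the lemma, $\bar\iota_C(e(C))\subseteq\theta_X^{-1}(C)$, and since $\theta_X$ is a bijection
\[\lambda\;\leqslant\;|e(C)|\;=\;|\bar\iota_C(e(C))|\;\leqslant\;|\theta_X^{-1}(C)|\;=\;|C|\;=\;\omega,\]
which is absurd for $\lambda$ uncountable. This contradiction would establish that $Set^m_{\geqslant\lambda}$ is not the category of points of a topos. The ``in particular'' clause then follows from the introduction: by (maximal) Morleyisation a theory axiomatizable in $L_{\infty,\omega}$ yields a geometric theory whose models and model homomorphisms are exactly the $L_{\infty,\omega}$-models and elementary embeddings, and by the theorem recalled at the start of the introduction such a category is the category of points of a topos. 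Apart from the symmetry lemma, every step above is formal manipulation of the adjunction $\Scott\dashv\Pt$ and of filtered colimits of sets, so I do not anticipate difficulties there; the delicate point is entirely the lemma and its use of $\lambda>\omega$.
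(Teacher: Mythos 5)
Your proof is correct, and its overall architecture is the same as the paper's: both use \cref{Prop:SbSetLambdaConstant} together with the identification $\Pt\Scott Set^m_{\geqslant\omega}\simeq Set^m_{\geqslant\omega}$ (via the Schanuel topos, \cref{Cor:Sec3main}) and the triangle identity to extract a functor $r:Set^m_{\geqslant\omega}\to Set^m_{\geqslant\lambda}$ whose restriction along the inclusion is naturally isomorphic to the identity, and then derive a cardinality contradiction from the existence of such an $r$. Where you diverge is in the endgame. The paper picks two subsets $A,B\subset\lambda$ of cardinality $\lambda$ with countable intersection and uses functoriality of $r$ on the resulting square of inclusions: $r(A\cap B)$, of cardinality at least $\lambda$, must inject into $r(A)\cap r(B)\cong A\cap B$, which is countable. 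You instead fix a single countable $C\subset X$ with $|X|=\lambda$ and prove a ``symmetry lemma'' via the action of $\mathrm{Sym}(X)$ and the equivariance of $\theta_X$, showing that the image of $e(C)\to e(X)$ lands in $\theta_X^{-1}(C)$; this forces $\lambda\leqslant|e(C)|\leqslant|C|=\omega$. Both arguments are correct and of comparable length; yours is essentially a hands-on rederivation, for the particular functor $e$, of the ``small support'' phenomenon that the paper develops systematically in \cref{finitesupport} and \cref{Lemma2}, whereas the paper's corollary deliberately avoids any support-style argument at this stage by exploiting the pullback of two large subsets. One cosmetic remark: the filtered-colimit decomposition $e(X)=\mathrm{colim}_C\,e(C)$ is not actually needed for your contradiction, since $\bar\iota_C=e(C\hookrightarrow X)$ is injective simply because every morphism of $Set^m_{\geqslant\omega}$ is, and the symmetry lemma together with the final count uses only this one map.
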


\begin{proof}
We mentioned before that the category of models of a $L_{\infty,\omega}$-theory and elementary embeddings between them is the category of points of the classifying topos of the Morleyisation of the theory.

If $Set^m_{\geqslant \lambda} \simeq \Pt(\Tcal)$ for some topos $\Tcal$, then this isomorphism is adjoint to a morphism:
\[ \Scott (Set^m_{\geqslant \lambda}) \rightarrow \Tcal \]
but then the isomorphism $ \Scott (Set^m_{\geqslant \omega}) \simeq \Scott (Set^m_{\geqslant \lambda}) \rightarrow \Tcal $ corresponds through the adjunction $\Scott \dashv \Pt$ to functor:
\[ C: Set^m_{\geqslant \omega} \rightarrow \Pt(\Tcal) \simeq Set^m_{\geqslant \lambda} \]
which fits into the commutative diagram:
\[\begin{tikzcd}[ampersand replacement=\&]
Set^m_{\geqslant \lambda} \arrow[bend left=30]{rr}{\simeq} \arrow{r} \arrow{d} \& \Pt \Scott (Set^m_{\geqslant \lambda}) \arrow{d}{\simeq} \arrow{r} \& \Pt \Tcal \\
Set^m_{\geqslant \omega} \arrow{r}{\simeq} \arrow[bend right=60]{urr}[swap]{C} \& \Pt \Scott (Set^m_{\geqslant \omega})
\end{tikzcd}\]
where the upper curved arrow is our chosen identification $Set^m_{\geqslant \lambda} \simeq \Pt(\Tcal)$ and the square on the left is just the naturality of the co-unit of adjunction $Id \rightarrow \Pt \Scott$ applied to the inclusion $Set^m_{\geqslant \lambda} \subset Set^m_{\geqslant \omega}$. In particular, $C$ restricted to $Set^m_{\geqslant \lambda} \subset Set^m_{\geqslant \omega}$ is equivalent to the identity functor on $Set^m_{\geqslant \lambda}$ (through the identification $\Pt \Tcal \simeq Set^m_{\geqslant \lambda}$).

This yields a contradiction: consider $A,B \subset \lambda$ two subset of cardinality $\lambda$ such that $A \cap B$ is of cardinality $\omega$. By functoriality of $C$ one has a commutative diagrams:
\[\begin{tikzcd}[ampersand replacement=\&]
C( A\cap B) \arrow{d} \arrow{r} \& C(A) \arrow{d} \\
C(B) \arrow{r}\& C(\lambda)
\end{tikzcd}\]
in $Set^m_{\geqslant \lambda}$. As $A,B, \lambda$ are of cardinality $\lambda$, $C$ is equivalent to the identity on them (and on the maps between them), hence the intersection $C(A) \cap C(B)$ in $C(\lambda)$ is isomorphic to the intersection of $A$ and $B$ hence is countable. Hence the diagram above induce a map $C(A \cap B) \rightarrow C(A) \cap C(B)$ which is a monomorphism from a set of cardinality at least $\lambda$ to a countable set, which is our contradiction.
\end{proof}

\section{The Scott $\kappa$-topos}
\label{section:infinitaryScott}

\begin{npar}
We would now like to extend \cref{Cor:MainFinitary} to prove \cref{Th:SetLamndaNotKappamodel}. In order to achieve that, we will use the machinery of $\kappa$-geometric toposes developed by C.Espindola in \cite{espindola2017infinitary}. In what follows, $\kappa$ will always denotes a regular cardinal.
\end{npar}

\begin{definition}
A $\kappa$-topos is a $\kappa$-exact localization of a presheaf category. i.e. a reflective subcategory of a presheaf category such that the reflection preserve all limits indexed by diagram of size $<\kappa$ (latter called $\kappa$-small limits)
\end{definition}

In \cite{espindola2017infinitary}, C.Espindola consider a class of toposes he called ``$\kappa$-geometric toposes'' that are defined as the Grothendieck toposes satisfying a further exactness property called property ``$T$'' (see \cite{espindola2017infinitary} definition 2.1.1 and the paragraph above it) which can be summarized as: ``a $\kappa$-small transfinite composition of covering families is a covering family''. He shows in the proof of his theorem $3.0.1$, that if a site with $\kappa$-small limits satisfies this property $T$, then the corresponding localization is $\kappa$-exact. By applying this to any site of definition with $\kappa$-small limits of a $\kappa$-geometric topos this shows that any $\kappa$-geometric topos in his sense is indeed a $\kappa$-topos in ours. It should be noted however that, contrary to what was claimd in an earlier version of this paper, his condition is stronger than being a $\kappa$-exact localization.

\bigskip

A $\kappa$-geometric morphism  between $\kappa$-toposes $\Tcal \rightarrow \Ecal$, is a functor $\Ecal \rightarrow \Tcal$ which preserves all colimits and $\kappa$-limits. In particular, a $\kappa$-point of a topos is a functor $\Tcal \rightarrow Sets$ which preserves all colimits and $\kappa$-limits.

\bigskip

The connection between points of toposes and models of $L_{\infty,\omega}$-theories has been generalized by C.Espindola in \cite{espindola2017infinitary} to the similar connection between $\kappa$-points of $\kappa$-geometric toposes and models of $L_{\infty,\kappa}$-theories (C.Espindola described Morleyisation for $L_{\kappa^+,\kappa}$, but everything generalizes immediately to $L_{\infty,\kappa}$) . So \cref{Th:SetLamndaNotKappamodel} above follows from:

\begin{theorem}
\label{Th:SetLambdaNotKappapoints}For any cardinal $\kappa < \lambda$ the category $Set^m_{ \geqslant \lambda}$ is not equivalent to the category of $\kappa$-points of a $\kappa$-topos.
\end{theorem}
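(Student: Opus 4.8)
The plan is to mimic the structure of the proof of \cref{Cor:MainFinitary}, replacing the ordinary Scott topos $\Scott$ by a $\kappa$-indexed version $\Scott_\kappa$ adjoint to the $\kappa$-points functor $\Pt_\kappa$, and replacing the category $Set^m_{\geqslant\omega}$ by $Set^m_{\geqslant\kappa}$ (or rather $Set^m_{\geqslant\kappa^+}$, chosen so that the relevant colimits that need to be preserved are $\kappa$-filtered rather than merely $\omega$-filtered). First I would set up the adjunction $\Scott_\kappa \dashv \Pt_\kappa$ between $\kappa$-accessible categories with $\kappa$-filtered colimits and $\kappa$-toposes, exactly as in \cref{section:finitaryScott}: a $\kappa$-point of a $\kappa$-topos preserves colimits and $\kappa$-small limits, $\kappa$-filtered colimits commute with $\kappa$-small limits and with arbitrary colimits, so a $\kappa$-filtered colimit of $\kappa$-points is a $\kappa$-point; dually $\Scott_\kappa A = \{F : A \to Set \mid F \text{ preserves }\kappa\text{-filtered colimits}\}$ is a $\kappa$-topos by the same Giraud-style argument (finite limits and colimits computed objectwise, plus accessibility via restriction to $\kappa$-presentable objects). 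This requires checking that $\Scott_\kappa A$ really is a $\kappa$-exact localization of a presheaf category, not merely a Grothendieck topos; the exactness of the reflection should follow from the fact that $\kappa$-small limits are again computed objectwise and that Kan extension along $A_\kappa \hookrightarrow A$ along the relevant colimit completion is $\kappa$-exact.

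Next I would establish the $\kappa$-analogue of \cref{Prop:SbSetLambdaConstant}, namely that for every $\lambda > \kappa$ the inclusion $Set^m_{\geqslant\lambda} \subset Set^m_{\geqslant\kappa^+}$ (say) induces an equivalence $\Scott_\kappa Set^m_{\geqslant\lambda} \simeq \Scott_\kappa Set^m_{\geqslant\kappa^+}$. I expect this to be proved in \cref{sec:computing} alongside the finitary case — indeed \cref{Prop:SbSetLambdaConstant} is stated only for the $\omega$-Scott topos, so the proof there presumably already carries the extra parameter $\kappa$, and one identifies $\Scott_\kappa Set^m_{\geqslant\kappa^+}$ with an appropriate ``$\kappa$-Schanuel'' topos whose $\kappa$-points are exactly $Set^m_{\geqslant\kappa^+}$; thus the unit $Set^m_{\geqslant\kappa^+} \to \Pt_\kappa \Scott_\kappa Set^m_{\geqslant\kappa^+}$ is an equivalence. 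Granting this, suppose $Set^m_{\geqslant\lambda} \simeq \Pt_\kappa(\Tcal)$ for some $\kappa$-topos $\Tcal$. Adjointness gives a $\kappa$-geometric morphism $\Scott_\kappa(Set^m_{\geqslant\lambda}) \to \Tcal$, hence via the equivalence $\Scott_\kappa Set^m_{\geqslant\kappa^+}\simeq\Scott_\kappa Set^m_{\geqslant\lambda}$ a $\kappa$-filtered-colimit-preserving functor $C : Set^m_{\geqslant\kappa^+} \to \Pt_\kappa(\Tcal) \simeq Set^m_{\geqslant\lambda}$ which, by naturality of the unit applied to $Set^m_{\geqslant\lambda}\subset Set^m_{\geqslant\kappa^+}$, restricts to (something equivalent to) the identity on $Set^m_{\geqslant\lambda}$ and on maps between its objects.

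Finally I would derive the contradiction by the same intersection trick, but carried out at the level of $\kappa$ rather than $\omega$: pick $A, B \subset \lambda$ of cardinality $\lambda$ with $|A\cap B| = \kappa$ (possible since $\kappa < \lambda$), form the pullback square $C(A\cap B) \to C(A), C(A\cap B)\to C(B)$ mapping into $C(\lambda)$, observe that $C$ is the identity on $A,B,\lambda$ and their inclusions, so the intersection $C(A)\cap C(B)$ inside $C(\lambda)$ has cardinality $\kappa < \lambda$, yet the induced map $C(A\cap B) \to C(A)\cap C(B)$ is a monomorphism out of a set of cardinality $\geqslant\kappa^+ > \kappa$; contradiction. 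The main obstacle, I expect, is purely in the foundational setup of \cref{section:infinitaryScott}: one must verify that $\Scott_\kappa A$ is genuinely a $\kappa$-topos in Espindola's sense (not just a Grothendieck topos), that $\Pt_\kappa$ really lands in $\kappa$-accessible categories with $\kappa$-filtered colimits, and that the adjunction unit/counit triangle identities go through — everything downstream of that is a routine transcription of the $\omega$-case, provided \cref{sec:computing} supplies the constancy statement $\Scott_\kappa Set^m_{\geqslant\lambda}\simeq\Scott_\kappa Set^m_{\geqslant\kappa^+}$ with $\kappa$ as a parameter. One should also be slightly careful about which lower bound ($\kappa$ versus $\kappa^+$) makes the relevant comma-category colimits $\kappa$-filtered, so that $C$ is forced to preserve them; choosing $\geqslant\kappa^+$ is the safe choice and still leaves room for the pair $A,B$ with countably-or-$\kappa$-sized intersection inside $\lambda$.
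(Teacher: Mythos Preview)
Your plan is the paper's: set up $\Scott_\kappa \dashv \Pt_\kappa$, invoke the constancy result from \cref{sec:computing} (stated as \cref{Prop:SbkappaEquivOnSetLambda}), and rerun the intersection argument of \cref{Cor:MainFinitary} with $\kappa$ in place of $\omega$. The paper literally says ``the proof of \cref{Cor:MainFinitary} then proves in the same way that this proposition implies \cref{Th:SetLambdaNotKappapoints}'' and nothing more.

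There is, however, a real slip in your final paragraph. You take $Set^m_{\geqslant\kappa^+}$ as the auxiliary category but then choose $A,B\subset\lambda$ with $|A\cap B|=\kappa$. That intersection is \emph{not} an object of $Set^m_{\geqslant\kappa^+}$, so $C(A\cap B)$ is undefined and the commutative square you want cannot even be drawn. Repairing this by taking $|A\cap B|\geqslant\kappa^+$ still fails in the boundary case $\lambda=\kappa^+$, which the hypothesis $\kappa<\lambda$ allows: there is then no cardinality strictly between $\kappa$ and $\lambda$ for the intersection to sit at. The paper instead uses $Set^m_{\geqslant\kappa}$ (this is exactly the statement of \cref{Prop:SbkappaEquivOnSetLambda}), which is a legitimate object of $Acc_\kappa$ since a $\kappa$-filtered colimit of sets of size $\geqslant\kappa$ again has size $\geqslant\kappa$. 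With that choice $|A\cap B|=\kappa$ lies in the domain of $C$, one has $|C(A\cap B)|\geqslant\lambda$ while $|C(A)\cap C(B)|=|A\cap B|=\kappa<\lambda$, and the contradiction goes through for every $\kappa<\lambda$. Your stated reason for preferring $\kappa^+$ (making certain colimits $\kappa$-filtered ``so that $C$ is forced to preserve them'') is a red herring: the contradiction uses only functoriality of $C$ and the fact that it restricts to the identity on $Set^m_{\geqslant\lambda}$; no colimit preservation is invoked at that stage. Incidentally, you also do not need the unit $Set^m_{\geqslant\kappa}\to\Pt_\kappa\Scott_\kappa Set^m_{\geqslant\kappa}$ to be an equivalence; naturality of the unit is all the diagram chase in \cref{Cor:MainFinitary} actually uses, and the paper never computes the $\kappa$-points of the $\kappa$-Schanuel topos.
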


In order to prove \cref{Th:SetLambdaNotKappapoints}, we will follow the same strategy as for the proof of \cref{Cor:MainFinitary}, appropriately generalized to this context of $\kappa$-toposes. The following claims have the same proofs\footnote{For point $3.$ one checks that this topos satisfies Espindola condition $T$: the condition only involves colimits and $\kappa$-small limits and epimorphisms, which in $\Scott_{\kappa} A$ are computed/detected levelwise, so it follows from the fact that the condition holds in Sets.} as their finitary counterpart proved in \cref{section:finitaryScott}. 

\begin{prop}
\begin{enumerate}
\item[]
\item If $\Tcal$ is a $\kappa$-topos, the category $\Pt_{\kappa}(\Tcal)$ of $\kappa$-points of $\Tcal$ is an accessible categories with $\kappa$-filtered colimits.

\item $\Pt_{\kappa}$ defines a functor from the category $\kappa\text{-}\topos$ of $\kappa$-toposes and $\kappa$-geometric morphisms to the category of $Acc_{\kappa}$ of accessible categories with functor preserving $\kappa$-filtered colimits between them.

\item Given an accessible category $A$ with $\kappa$-directed colimits the category:
\[ \Scott_{\kappa} A = \{ F: A \rightarrow Set | F \text{ preserves $\kappa$-filtered colimits } \} \]
is a $\kappa$-geometric topos (in particular a $\kappa$-topos).

\item One has an adjunction $\Scott_{\kappa} \dashv \Pt_{\kappa}$ :
\[ \Scott_{\kappa} : Acc_{\kappa} \leftrightarrows \kappa\text{-}\topos : \Pt_{\kappa}. \]
\end{enumerate}

\end{prop}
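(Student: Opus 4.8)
The plan is to run the four arguments of \cref{section:finitaryScott} — the two opening remarks, the unnumbered proposition ``$\Scott A$ is a Grothendieck topos'', and the proposition exhibiting the adjunction $\Scott\dashv\Pt$ — essentially verbatim, replacing ``finite'' by ``$\kappa$-small'', ``filtered''/``directed'' by ``$\kappa$-filtered''/``$\kappa$-directed'', and $\omega$ by $\kappa$ throughout. The single genuinely new ingredient is the verification, in point 3, that the topos $\Scott_\kappa A$ satisfies Espindola's property $T$ rather than merely being a $\kappa$-exact localization; I treat it last.

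For points 1 and 2, the engine is that in $Set$ the $\kappa$-filtered colimits commute with $\kappa$-small limits and with arbitrary colimits. Hence inside $[\Tcal, Set]$ the full subcategory of functors preserving all colimits and all $\kappa$-limits — i.e. $\Pt_\kappa(\Tcal)$ — is closed under $\kappa$-filtered colimits (computed pointwise), so it has them. For accessibility I would present $\Tcal$ as a $\kappa$-exact reflective localization of a presheaf category $[\mathcal{C}^{op},Set]$ with reflector $L$; precomposing a $\kappa$-point with $L$ (which preserves colimits and $\kappa$-limits) identifies $\Pt_\kappa(\Tcal)$ with the full subcategory of $\kappa$-flat functors $\mathcal{C}\to Set$ that invert the maps inverted by $L$ — a subcategory cut out of a presheaf category by conditions of the ``preserves these limits / inverts these maps'' type, hence accessibly embedded and accessible by the standard results of \cite{adamek1994locally}, with the embedding preserving $\kappa$-filtered colimits. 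Point 2 is then immediate: a $\kappa$-geometric morphism $f:\Tcal\to\Ecal$ with inverse image $f^*$ preserving colimits and $\kappa$-limits induces $p\mapsto p^*\circ f^*$, which sends $\kappa$-points to $\kappa$-points and preserves $\kappa$-filtered colimits because these are pointwise on both sides; functoriality is clear.

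For point 3 I would copy the proof of the unnumbered proposition: since $\kappa$-small limits and arbitrary colimits commute with $\kappa$-filtered colimits, $\Scott_\kappa A$ is closed in $[A,Set]$ under $\kappa$-small limits and all colimits, which are therefore computed pointwise, so the Giraud-type exactness axioms (disjoint and stable coproducts, effective and stable quotients of congruences) hold because they hold pointwise in $Set$; accessibility then follows, as in the finitary case, from the equivalence
\[ \Scott_\kappa A \simeq \{ F : A_\lambda \to Set \mid F \text{ preserves $\lambda$-small $\kappa$-filtered colimits} \} \]
valid for any $\lambda\geqslant\kappa$ with $A$ being $\lambda$-accessible (using $A\simeq Ind_\lambda A_\lambda$ and Kan extension / restriction), the right-hand side being sketchable and hence accessible by \cite[Cor. 2.61]{adamek1994locally}. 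For point 4 I would exhibit unit and counit exactly as before: the unit $A\to\Pt_\kappa\Scott_\kappa A$ sends $a$ to the $\kappa$-point ``evaluation at $a$'' (which preserves the pointwise $\kappa$-small limits and all colimits) and preserves $\kappa$-filtered colimits; the counit at a $\kappa$-topos $\Tcal$ is the $\kappa$-geometric morphism $\Scott_\kappa(\Pt_\kappa\Tcal)\to\Tcal$ whose inverse image is $X\mapsto(p\mapsto p^*X)$, a functor $\Tcal\to\Scott_\kappa(\Pt_\kappa\Tcal)$ preserving all colimits and $\kappa$-limits. Naturality in $A$ and $\Tcal$ and the two triangle identities are the same diagram chase as in \cref{section:finitaryScott}.

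The step I expect to require real care is the one flagged in the text: upgrading ``Grothendieck topos'' to ``$\kappa$-geometric topos'' in point 3, since property $T$ is strictly stronger than $\kappa$-exactness. I would verify $T$ directly on the small site $A_\lambda^{op}$ (or any small site of definition of $\Scott_\kappa A$): $T$ is phrased purely in terms of colimits, $\kappa$-small limits, and epimorphisms, all of which are computed or detected levelwise in $\Scott_\kappa A\subset[A,Set]$, so the statement reduces to the (true) fact that a $\kappa$-small transfinite composite of levelwise-surjective families of maps is again surjective in $Set$; this makes $\Scott_\kappa A$ a $\kappa$-topos in particular. Beyond this, the only cardinal-arithmetic care needed is the choice of $\lambda\geqslant\kappa$ above and the systematic replacement of ``$\omega$-filtered'' by ``$\kappa$-filtered'' in the sketch presenting $\Scott_\kappa A$ — everything else is a routine up-cardinal transcription.
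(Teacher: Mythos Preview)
Your proposal is correct and matches the paper's approach exactly: the paper simply states that points 1--4 ``have the same proofs as their finitary counterpart proved in \cref{section:finitaryScott}'', with a footnote for point 3 saying that property $T$ holds because it ``only involves colimits and $\kappa$-small limits and epimorphisms, which in $\Scott_\kappa A$ are computed/detected levelwise, so it follows from the fact that the condition holds in Sets''. Your write-up is in fact more detailed than the paper's (particularly on the accessibility of $\Pt_\kappa(\Tcal)$, which the paper merely asserts), but the strategy and the key levelwise verification of $T$ are the same.
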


In \cref{Cor:Sec3main} we will prove more generally:

\begin{prop}
\label{Prop:SbkappaEquivOnSetLambda}For any $\kappa \leqslant \lambda$, the inclusion $Set^m_{\lambda} \subset Set^m_{\kappa}$ induces an equivalence of categories:
\[ \Scott_{\kappa} Set^m_{\lambda} \overset{\simeq}{\rightarrow} \Scott_{\kappa} Set^m_{\kappa}. \]
\end{prop}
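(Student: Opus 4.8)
Recall that for a $\kappa$-filtered-colimit-preserving functor $f$ the geometric morphism $\Scott_\kappa(f)$ has inverse image part ``precomposition with $f$''. Applied to the inclusion $i\colon Set^m_{\geqslant\lambda}\hookrightarrow Set^m_{\geqslant\kappa}$ (which does preserve $\kappa$-filtered colimits, since these are just $\kappa$-filtered unions, computed the same way in both categories), the inverse image of $\Scott_\kappa(i)$ is the restriction functor $\rho\colon\Scott_\kappa Set^m_{\geqslant\kappa}\to\Scott_\kappa Set^m_{\geqslant\lambda}$, $G\mapsto G\circ i$. So the plan is to prove that $\rho$ is an equivalence of categories, and I would do this by exhibiting a common small site for both toposes, independent of which cardinal $\geqslant\kappa$ one uses, and checking that $\rho$ is the resulting comparison.

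For a set $A$ with $|A|<\kappa$ and any $\nu\geqslant\kappa$ let $h^{\nu}_A\colon Set^m_{\geqslant\nu}\to Set$ be the functor $X\mapsto\mathrm{Inj}(A,X)$ sending $X$ to its set of injections from $A$. Since $|A|<\kappa$, any injection from $A$ into a $\kappa$-filtered union of monomorphisms factors (uniquely, by the monomorphism condition) through one of the terms, so $h^{\nu}_A$ preserves $\kappa$-filtered colimits, i.e.\ $h^{\nu}_A\in\Scott_\kappa Set^m_{\geqslant\nu}$; moreover $\rho$ carries $h^{\kappa}_A$ to $h^{\lambda}_A$. A direct computation of natural transformations --- this is the place where one uses that $\nu$ is infinite and $>|B|$, so that a transformation $h^{\nu}_A\to h^{\nu}_B$ is pinned down by its value on the object $A\sqcup\nu$ and, by considering the permutations of the $\nu$-summand, is forced to be ``composition with a fixed injection $B\hookrightarrow A$'' --- yields $\mathrm{Hom}(h^{\nu}_A,h^{\nu}_B)\cong\mathrm{Inj}(B,A)$. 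Hence the full subcategory $\mathcal C_\nu\subseteq\Scott_\kappa Set^m_{\geqslant\nu}$ spanned by the $h^{\nu}_A$ is equivalent to the opposite of $Set^m_{<\kappa}$, by an equivalence that does not depend on $\nu$ and is compatible with $\rho$.

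Next I would invoke Grothendieck's Comparison Lemma: once one knows that $\mathcal C_\nu$ is a \emph{separating} family in the topos $\Scott_\kappa Set^m_{\geqslant\nu}$ (every object receives an epimorphism from a coproduct of $h^{\nu}_A$'s), it follows that $\Scott_\kappa Set^m_{\geqslant\nu}\simeq Sh(\mathcal C_\nu,J)$ for the topology $J$ induced from the canonical topology. That induced topology is the atomic one: epimorphisms in $\Scott_\kappa Set^m_{\geqslant\nu}$ are exactly the objectwise surjections (finite limits, colimits and images are all computed objectwise there), and for $|B|<\kappa\leqslant\nu$ the map $h^{\nu}_B\to h^{\nu}_A$ attached to an injection $A\hookrightarrow B$ is objectwise surjective because any injection $A\hookrightarrow X$ with $|X|\geqslant\nu>|B|$ extends to an injection $B\hookrightarrow X$; while no $h^{\nu}_A$ is strict-initial, since $\mathrm{Inj}(A,X)\neq\varnothing$ for $|X|\geqslant\nu$. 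Thus a sieve on $h^{\nu}_A$ is covering iff it is non-empty. Since $\mathcal C_\nu\simeq (Set^m_{<\kappa})^{\mathrm{op}}$ with the atomic topology \emph{independently of $\nu$}, one gets $\Scott_\kappa Set^m_{\geqslant\lambda}\simeq Sh(Set^m_{<\kappa},J_{at})\simeq\Scott_\kappa Set^m_{\geqslant\kappa}$ (the $\kappa$-ary analogue of the Schanuel topos), and tracing through the construction this composite equivalence \emph{is} the restriction functor $\rho=\Scott_\kappa(i)^{*}$; hence $\Scott_\kappa(i)$ is an equivalence.

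The one genuinely substantive step, and the point I expect to be the main obstacle, is the separating/density claim above: that every $F\in\Scott_\kappa Set^m_{\geqslant\nu}$ is covered by the $h^{\nu}_A$'s. Equivalently one must show that for every $X$ of cardinality $\geqslant\nu$ and every $v\in F(X)$ there is a subset $A\subseteq X$ with $|A|<\kappa$ such that $v$ is invariant under every endomorphism of $X$ in $Set^m_{\geqslant\nu}$ fixing $A$ pointwise; writing $X=A\sqcup(X\setminus A)$ and identifying $X\setminus A$ with $\nu$ (possible as $|A|<\kappa\leqslant\nu$), such a $v$ then gives a natural transformation $h^{\nu}_A\to F$ realizing it. The input is preservation of $\kappa$-filtered colimits: $X$ is the $\kappa$-filtered union of its proper subsets with complement of size $<\kappa$, so $v$ comes from some such subset, and one must then iterate this ``support-shrinking'' down to size $<\kappa$ --- an argument requiring some care, in particular when $\lambda$ or $\nu$ is singular. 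This bounded-support lemma, whose finitary case ($\kappa=\omega$) already lies behind the identification of $\Scott Set^m_{\geqslant\omega}$ with the Schanuel topos, is where I expect the real work to be.
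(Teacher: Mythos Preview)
Your overall strategy matches the paper's exactly: your $h^\nu_A$ are the paper's $R_V$, the Yoneda-style computation of morphisms out of them is \cref{lemma:morphismOutOfR_V}, the identification of the induced topology as atomic and the appeal to Grothendieck's comparison lemma are \cref{Cor:Sec3main}, and you correctly isolate the bounded-support statement (\cref{finitesupport}) as the crux.

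However, your suggested attack on that last lemma does not work as written. The diagram of proper subsets of $X$ with complement of size $<\kappa$ is \emph{not} $\kappa$-filtered, nor even filtered: if $a\neq b$ in $X$ then $X\setminus\{a\}$ and $X\setminus\{b\}$ have union $X$, so they admit no common proper upper bound. Hence you cannot conclude that $v$ ``comes from some such subset'', and there is no evident way to iterate support-shrinking from this starting point. The paper sidesteps this by using instead the genuinely $\kappa$-filtered colimit
\[
X\coprod X \;=\; \underset{K\subset X,\ |K|<\kappa}{\operatorname{colim}}\,(X\coprod K),
\]
where each $X\coprod K$ is still an object of $Set^m_{\geqslant\lambda}$, together with two auxiliary lemmas: first (\cref{Lemma1}), any $x\in F(X)$ can be transported along an isomorphism $X\cong X\coprod X$ to an element supported in the second copy of $X$, and by the colimit above that element is also supported in some $X\coprod K$ with $K$ $\kappa$-small; second (\cref{Lemma2}), an element supported both in $A$ and in $B$ with $A\cup B$ the whole set and $A\cap B$ $\kappa$-small is already supported in $A\cap B$. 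Combining these two facts yields $\kappa$-small support. This doubling trick is the missing idea in your outline.
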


The proof of \cref{Cor:MainFinitary} then proves in the same way that this proposition implies \cref{Th:SetLambdaNotKappapoints} and hence \cref{Th:SetLamndaNotKappamodel} as well.

\section{Computing  $\Scott_{\kappa} Set^m_{\geqslant \lambda}$.}
\label{sec:computing}

As the title suggest the goal of this section is to understand the topos $\Scott_{\kappa} Set^m_{\geqslant \lambda}$ for any two fixed cardinal $\lambda \geqslant \kappa$ with $\kappa$ regular. More precisely, we want to prove \cref{Prop:SbkappaEquivOnSetLambda}, and \cref{Prop:SbSetLambdaConstant} which is essentially the special case $\kappa = \omega$.

\begin{npar}
We start by introducing some objects of $\Scott_{\kappa} Set^m_{\geqslant \lambda}$:

For any set $V$ of cardinality strictly smaller than $\kappa$, the functor:
\[R_V: \begin{array}{c c l}
Set^m_{\geqslant \lambda} & \rightarrow & Set \\
S & \mapsto & \{ \text{Monomorphisms } V \rightarrow S \}
\end{array}\]

Is an element of $\Scott_{\kappa} Set^m_{\geqslant \lambda}$. Moreover this construction $V \mapsto R_V$ naturally defines a functor: 
\[R_{\bullet} : (Set^m_{< \kappa})^{op} \rightarrow \Scott_{\kappa} Set^m_{\geqslant \lambda} \]
where $Set^m_{< \kappa}$ is the category of sets of cardinality smaller than $\kappa$ and monomorphisms between them. Our main result in this section is that the natural ``Nerve'' functor:
\[ \Scott_{\kappa} Set^m_{\geqslant \lambda} \rightarrow Prsh\left( (Set^m_{< \kappa})^{op} \right)  \]
induced by $R$ will identify $\Scott_{\kappa} Set^m_{\geqslant \lambda}$ with the category of sheaves on $(Set^m_{< \kappa})^{op}$ for the atomic topology (i.e. the topology where every non-empty sieve is a cover). In order to prove that one needs to better understand morphisms between the $R_V$, and more generally morphisms from $R_V$ to any other objects.

\end{npar}

We fix $\lambda$ an infinite cardinal and $F \in \Scott_{\kappa} (Set^m_{\geqslant \lambda})$, i.e. a $\kappa$-filtered colimits preserving functor $Set^m_{\geqslant \lambda} \rightarrow Set$.

\begin{definition}
An element $x \in F(X)$ is said to have support in $K \subset X$ if for all $f,g: X \rightrightarrows Y$ one has:
 \[ f_{|K} = g_{|K} \Rightarrow f x = g x \]
\end{definition}

For example, if $v:Y \rightarrow X$ is a map in $Set^m_{\geqslant \lambda}$ and then for $y \in F(Y)$ the element $vy \in F(X)$ has support in $Y$. But the converse doesn't have to be true, and contrary to this observation the notion of ``support in $Y$'' makes sense for $Y$ of cardinality smaller than $\lambda$.

Here $f x, g x$ or $v y$ of course means $F(f) (x)$,$F(g) (x)$ and $F(v)(y)$. This abuse of notation will be use constantly in the text.

\begin{lemma}
\label{lemma:morphismOutOfR_V}Fix a monomorphism $\tau:V \hookrightarrow \lambda$, i.e. $ \tau \in R_V(\lambda)$.

Then for $V \in Set^m_{< \kappa}$ and $\Fcal \in \Scott_{\kappa} Set^m_{\geqslant \lambda} $  one has a bijection:
\[ Hom(R_V, \Fcal) \overset{\simeq}{\rightarrow}  \{ x \in\Fcal(\lambda) | x \text{ has support in $\tau(V)$} \}\]
sending a morphism $f :R_V \rightarrow \Fcal$ in $\Scott_{\kappa} Set^m_{\geqslant \lambda}$  to $f(\tau) \in \Fcal(\lambda)$.

\end{lemma}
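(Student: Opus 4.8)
The plan is to construct the bijection in both directions and check they are mutually inverse. For the forward map, given $f : R_V \to \Fcal$, we send it to $f_\lambda(\tau) \in \Fcal(\lambda)$. The first task is to verify this element has support in $\tau(V)$: if $g,h : \lambda \rightrightarrows Y$ agree on $\tau(V)$, then $g \circ \tau = h \circ \tau$ as elements of $R_V(Y)$ (both are the monomorphism $V \to Y$ obtained by restricting), so applying naturality of $f$ to the maps $g$ and $h$ gives $g \cdot f_\lambda(\tau) = f_Y(R_V(g)(\tau)) = f_Y(R_V(h)(\tau)) = h \cdot f_\lambda(\tau)$; this is exactly the support condition. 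Here one must be slightly careful that $g,h$ are honest maps in $Set^m_{\geqslant \lambda}$, i.e.\ that $Y$ has cardinality at least $\lambda$ and that $g,h$ are monomorphisms — but the support condition as stated quantifies only over such maps, so this is automatic.

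For the inverse map, suppose $x \in \Fcal(\lambda)$ has support in $\tau(V)$. I need to build a natural transformation $f : R_V \to \Fcal$. Given $S \in Set^m_{\geqslant \lambda}$ and a monomorphism $m : V \hookrightarrow S$ (an element of $R_V(S)$), I want to define $f_S(m) \in \Fcal(S)$. The idea: choose any monomorphism $j : \lambda \hookrightarrow S$ extending $m$ in the sense that $j \circ \tau = m$ — such a $j$ exists because $S$ has cardinality at least $\lambda = |\lambda|$ and $\tau(V)$ has cardinality $< \kappa \le \lambda$, so one extends the partial injection $m \circ \tau^{-1} : \tau(V) \to S$ to a full injection $\lambda \to S$. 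Then set $f_S(m) := j \cdot x = \Fcal(j)(x)$. The crucial point is \emph{well-definedness}: if $j, j' : \lambda \rightrightarrows S$ both satisfy $j \tau = j' \tau = m$, then $j$ and $j'$ agree on $\tau(V)$, and since $x$ has support in $\tau(V)$ we get $j \cdot x = j' \cdot x$. Naturality in $S$ is then a short diagram chase: given $u : S \to S'$ a monomorphism in $Set^m_{\geqslant \lambda}$ and $m \in R_V(S)$, one has $R_V(u)(m) = u \circ m$, and choosing $j$ extending $m$, the composite $u \circ j$ extends $u \circ m$, so $f_{S'}(u \circ m) = (u j) \cdot x = u \cdot (j \cdot x) = u \cdot f_S(m)$.

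Finally I check the two constructions are inverse. Starting from $f$, forming $x = f_\lambda(\tau)$, and rebuilding $f'$: for $m : V \hookrightarrow S$ pick $j : \lambda \hookrightarrow S$ with $j\tau = m$; then $f'_S(m) = j \cdot f_\lambda(\tau) = f_S(R_V(j)(\tau)) = f_S(j \tau) = f_S(m)$ by naturality of $f$, so $f' = f$. Starting from $x$ with support in $\tau(V)$, building $f$, then forming $f_\lambda(\tau)$: here we may take $j = \mathrm{id}_\lambda$ as the extension of $\tau$ along $\tau$, giving $f_\lambda(\tau) = \mathrm{id} \cdot x = x$. So the maps are mutually inverse.

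The main obstacle is the well-definedness of the inverse map, and more precisely the availability of the extension $j : \lambda \hookrightarrow S$ with prescribed restriction to $\tau(V)$ — this is where the hypotheses $|V| < \kappa \le \lambda$ and $|S| \ge \lambda$ are all used, and it is the one place where the cardinal bookkeeping genuinely matters. Everything else is formal naturality chasing. One subtlety worth a sentence in the writeup: the definition of "support in $K$" quantifies over all $f, g : X \rightrightarrows Y$ in the category, so when $X = \lambda$ we are only comparing maps into objects $Y$ of cardinality $\ge \lambda$; this suffices for the argument because the extensions $j, j'$ we need to compare land in $S$, which has cardinality $\ge \lambda$ by assumption.
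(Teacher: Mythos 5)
Your proof is correct and follows essentially the same route as the paper's: both directions are the Yoneda-style argument, with the key point being that an element $m\in R_V(S)$ extends (non-uniquely) to a monomorphism $\lambda\hookrightarrow S$ and that the support condition makes the resulting value independent of the chosen extension. Your writeup is if anything slightly more explicit than the paper's on the naturality check and on the cardinality argument guaranteeing the existence of the extension.
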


Note that this is basically the Yoneda lemma, and the proof is essentially the usual proof of the Yoneda lemma, where ``$\tau$'' and the notion of support have been added to fix the problem that the $V \in Set^m_{< \kappa}$ are not part of the category on which $\Fcal$ is defined.

\begin{proof}

 One easily see that $\tau \in R_V(\lambda)$ has support in $\tau(V)$, and this implies that given a morphism $f :R_V \rightarrow \Fcal$ the image $f(\tau)$ has support in $\tau(V)$ as well. i.e. the morphism in the lemma is well defined.

Conversely, given any $x \in \Fcal(\lambda)$ with support in $\tau(V)$ and given $w \in R_V(Y)$, $w$ is an injective map from $V$ to $Y$, for any extension $\widetilde{w}$ of $w$ to an injective map $\lambda \hookrightarrow Y$, the value of $\widetilde{w} x$ only depends on $w$ as $x$ as support in $V$, so we define $w x := \widetilde{w} x$ for any such extension $\widetilde{w}$. One easily see that $w \mapsto w x$ is a natural transformation from $R_V$ to $\Fcal$ which sends $\tau$ to $x$. Conversely any morphism $f:R_V \rightarrow \Fcal$ sending $\tau$ to $x$ is equal to this one: indeed given $w \in R_V(Y)$ and $\widetilde{w}$ an extension of $w$ along $\tau$ as a morphism $\lambda \rightarrow Y$ then $w = \widetilde{w}.\tau$ so by naturality of $f$ one have $f w = \widetilde{w} (f \tau ) = \widetilde{w} x$.
\end{proof}

In the rest of this section one will say that a set $X$ is $\kappa$-small if the cardinal of $X$ is strictly less than $\kappa$. In the case $\kappa =\omega$ this just means finite.

\begin{prop}
\label{finitesupport}For any $\Fcal \in \Scott_{\kappa} Set^m_{\geqslant \lambda}$, any element of $\Fcal(X)$ admits a $\kappa$-small support. I.e. it has support in a set $V \subset X$ of cardinality strictly less than $\kappa$.
\end{prop}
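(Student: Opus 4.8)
The plan is to exploit that $\Fcal$ preserves $\kappa$-filtered colimits and that $X$, being of cardinality at least $\lambda\geqslant\kappa$, can be written as a suitable $\kappa$-filtered colimit of its subobjects in $Set^m_{\geqslant\lambda}$. First I would fix $x\in\Fcal(X)$ and consider the poset $\Scal$ of subsets $S\subseteq X$ with $\lambda\leqslant |S|$ and $|X\setminus S|$ large enough that $S$ still admits ``enough room'' — concretely, since $|X|\geqslant\lambda$, one can look at the diagram of all $S\subseteq X$ with $|S|\geqslant\lambda$ ordered by inclusion. The subtlety is that this poset is $\kappa$-filtered (indeed $\lambda$-filtered) only after one is a little careful: the union of fewer than $\kappa$ such subsets is again such a subset (cardinality is preserved since each has size $\geqslant\lambda\geqslant\kappa$), so the inclusion-poset of subsets of size exactly... no — of size $\geqslant\lambda$ is $\kappa$-filtered, and its colimit in $Set^m_{\geqslant\lambda}$ is $X$ itself, with the colimit cocone given by the inclusions. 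Hence $\Fcal(X)=\operatorname{colim}_{S}\Fcal(S)$, so $x$ is in the image of $\Fcal(S)\to\Fcal(X)$ for some $S\subsetneq X$ of size $\geqslant\lambda$; write $x = \iota_S(x_S)$ with $\iota_S:S\hookrightarrow X$.

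That alone does not give a $\kappa$-small support, but it lets me reduce the problem. The key observation is that an element pulled back from $\Fcal(S)$ for $S$ a \emph{proper} subset of $X$ of size $\geqslant\lambda$ automatically has support in $S$ (this is the remark following the definition of support, applied to the monomorphism $\iota_S$). So it suffices to iterate: replace $X$ by $S$ and keep shrinking. The heart of the argument is then to show this process stabilizes — i.e. that one cannot keep halving a support indefinitely — by a pigeonhole/counting argument on maps out of $X$. Here is the mechanism I would use: suppose $x\in\Fcal(X)$ has support in $S$, with $|S|\geqslant\kappa$; I want to produce a strictly smaller support. Pick a partition (or a chain of nested subsets) of $S$ into pieces and use two maps $f,g:X\rightrightarrows Y$ that agree off a carefully chosen sub-part of $S$ of size $<|S|$ to detect which part of $S$ the element ``really depends on'', contradicting minimality unless $S$ was already $\kappa$-small. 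More precisely, I expect the clean statement to be: the collection of subsets of $X$ in which $x$ has support is closed under arbitrary intersections (a short diagram-chase with the pushout/pullback square of subsets and two test maps $f,g$), so there is a \emph{smallest} support $S_x\subseteq X$; and then one argues $|S_x|<\kappa$ because if $|S_x|\geqslant\kappa$ one can write $S_x$ as a $\kappa$-filtered union of proper subsets, deduce via the colimit-preservation that $x$ comes from $\Fcal$ of one of them, and hence has support strictly inside $S_x$ — contradiction.

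The main obstacle, and the step I would spend the most care on, is proving that supports are closed under intersection (hence that a least support exists). For a binary intersection $S\cap T$: given $x$ with support in $S$ and in $T$, and $f,g:X\rightrightarrows Y$ with $f|_{S\cap T}=g|_{S\cap T}$, I need $fx=gx$. The trick is to choose an auxiliary object and factor the comparison: build $h:X\to Y'$ that agrees with $f$ on $S$ and with $g$ on $T$ — this is possible when $|Y|$, $|X|$ are large (one may need to enlarge the codomain, harmless since we only need one witness in $Set^m_{\geqslant\lambda}$) — and also arrange maps $Y\rightrightarrows Y'$ recovering $f,g$, so that support in $S$ gives $hx$ ``$=$'' $fx$ and support in $T$ gives $hx$ ``$=$'' $gx$ after post-composition. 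Making the cardinalities line up so that all the needed monomorphisms into objects of $Set^m_{\geqslant\lambda}$ exist is the fiddly part, but since $\lambda$ is infinite and all objects in sight have size $\geqslant\lambda$ there is always enough room; for infinite intersections one then passes to the $\kappa$-filtered (in fact directed-by-reverse-inclusion, but one must check $\kappa$-directedness of the relevant diagram, which again follows from $|S|\geqslant\lambda\geqslant\kappa$ for the sets involved) colimit and uses colimit-preservation of $\Fcal$ once more. Once the least support $S_x$ is shown to exist, the cardinality bound $|S_x|<\kappa$ is forced exactly as sketched above.
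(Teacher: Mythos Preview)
Your approach has a genuine gap at its core. The filtered-colimit argument you describe can never produce a support smaller than $\lambda$: every object of $Set^m_{\geqslant\lambda}$ has size at least $\lambda$, so any $\kappa$-filtered colimit you form in this category and feed to $\Fcal$ has terms of size $\geqslant\lambda$; the best you can ever conclude from $\Fcal(X)=\operatorname{colim}_i\Fcal(S_i)$ is that $x$ comes from some $\Fcal(S_i)$ with $|S_i|\geqslant\lambda$. Even that much fails as you wrote it: the poset of all $S\subseteq X$ with $|S|\geqslant\lambda$ contains $X$ itself as terminal object, so the colimit is trivial and you cannot conclude $S\subsetneq X$; and the poset of \emph{proper} such subsets is not directed when $|X|=\lambda$, since for instance $X\setminus\{a\}$ and $X\setminus\{b\}$ have no common proper upper bound. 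The same obstruction kills your final contradiction: once you have a minimal support $S_x$, if $\kappa\leqslant|S_x|<\lambda$ then $S_x$ is not an object of the category and $\Fcal(S_x)$ is undefined; if $|S_x|\geqslant\lambda$ you again cannot produce a $\kappa$-filtered diagram of strictly smaller objects of the category with colimit $S_x$. (There is also a smaller gap: having support in $S_x$ does not by itself mean $x$ lies in the image of $\Fcal(S_x)\to\Fcal(X)$; the paper explicitly notes this converse can fail.)

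The missing idea, which the paper supplies, is to avoid writing $X$ as a colimit of small things and instead write $X\coprod X$ as the $\kappa$-filtered colimit $\operatorname{colim}_{K\subset X,\,|K|<\kappa}\bigl(X\coprod K\bigr)$. Here every term $X\coprod K$ genuinely lives in $Set^m_{\geqslant\lambda}$, and colimit-preservation now gives that any element of $\Fcal(X\coprod X)$ has support in some $X\coprod K$ with $|K|<\kappa$. One then transports $x\in\Fcal(X)$ along a suitably chosen isomorphism $\theta:X\cong X\coprod X$ so that $\theta x$ also has support in the second summand $i_2(X)$. At this point your intersection-of-supports idea does the rest --- but only the very special case $A\cup B=X\coprod X$ with $A\cap B$ $\kappa$-small is needed, and that is all the paper proves (and remarks that the general case, while expected, would be painful).
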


The general idea of the proof is as follows. If $\kappa$-small sets were available in our category, we could just say that:
\[ X = \underset{K \subset X \atop K \text{ $\kappa$-small}}{colim} K \]
Hence as $F$ commutes with $\kappa$-filtered colimits any elements of $F(X)$ should be in the image of $F(K)$ for some $\kappa$-small $K$ and hence have support in $K$. Of course this is not possible as $F(K)$ is not defined for $\kappa$-small $K$. Our proof will rely instead on the following filtered colimits:

\begin{equation}\label{colim1} X \coprod X = \underset{K \subset X \atop K \text{ $\kappa$-small}}{ colim} \left( X \coprod K \right) \end{equation}

And a few tricks organized in the following two lemmas.

\begin{lemma}
\label{Lemma1}\begin{itemize}
\item[]

\item Let $x \in F(X \coprod X)$ then $x$ has support in $X \coprod K$ for some $\kappa$-small $K \subset X$.

\item Let $x \in F(X)$ then there exists an isomorphism $\theta: X \overset{\sim}{\rightarrow} X \coprod X$ such that $\theta x \in F(X \coprod X)$ has support in the second component $i_2:X \hookrightarrow X \coprod X$. 

\end{itemize}

\end{lemma}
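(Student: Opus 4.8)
\textbf{Proof plan for \cref{Lemma1}.}

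The plan is to prove the two bullets in order, using the displayed $\kappa$-filtered colimit \eqref{colim1} as the only point where the colimit-preservation of $F$ is invoked.

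For the first bullet, I would start from the fact that the poset of $\kappa$-small subsets $K\subseteq X$ is $\kappa$-directed, so the diagram $K\mapsto X\coprod K$ (with the evident monomorphisms $X\coprod K\hookrightarrow X\coprod K'$ for $K\subseteq K'$) is $\kappa$-filtered and, as recorded in \eqref{colim1}, has colimit $X\coprod X$ with colimit cocone the maps $X\coprod K\hookrightarrow X\coprod X$. Since $F$ preserves $\kappa$-filtered colimits, $F(X\coprod X)=\operatorname{colim}_K F(X\coprod K)$; hence any $x\in F(X\coprod X)$ is of the form $F(\iota_K)(y)$ for some $\kappa$-small $K$, some $y\in F(X\coprod K)$, where $\iota_K\colon X\coprod K\hookrightarrow X\coprod X$. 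As noted in the text right after the definition of support, an element in the image of $F(\iota_K)$ has support in $X\coprod K$; so this $x$ has support in $X\coprod K\subseteq X\coprod X$, which (identifying $K$ with its image in the first copy of $X$, say) is exactly a set of the required form. One small point to be careful about: the colimit description \eqref{colim1} presents $X\coprod X$ with the second summand fixed and the first summand filtered over its $\kappa$-small subsets, so ``support in $X\coprod K$'' means support in the union of the full second copy of $X$ with a $\kappa$-small piece of the first copy; this is the statement as written once one keeps track of which copy is which.

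For the second bullet, the idea is to transport a given $x\in F(X)$ across a cleverly chosen automorphism so that it lands in the image of $F$ of an inclusion of the form just produced. Fix any isomorphism $\sigma\colon X\xrightarrow{\sim} X\coprod X$ (possible since $X$ is infinite of size $\geq\lambda>\kappa\geq\omega$; one can even take $\sigma$ to be an isomorphism $X\xrightarrow{\sim}X\coprod X'$ with $X'$ a copy of $X$). Apply the first bullet to $\sigma x\in F(X\coprod X)$: there is a $\kappa$-small $K$ inside, WLOG, the \emph{first} summand such that $\sigma x$ has support in $X'\coprod K$ where $X'$ is the second summand together with whatever is left — more precisely, $\sigma x$ has support in a subset of $X\coprod X$ that contains an entire copy of $X$ (complement of a $\kappa$-small set) plus a $\kappa$-small set. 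Since this ``large'' piece is itself isomorphic to $X$ and its complement in $X\coprod X$ is $\kappa$-small hence in bijection with a subset of $X$, one may post-compose $\sigma$ with a further automorphism of $X\coprod X$ that carries the support-set isomorphically into the second component $i_2\colon X\hookrightarrow X\coprod X$; the composite $\theta$ is then an isomorphism $X\xrightarrow{\sim}X\coprod X$ with $\theta x$ having support in $i_2(X)$, as desired. The only thing to verify is that ``having support in $S$'' is stable under applying an automorphism that restricts to the identity on $S$ and renaming the source along an isomorphism, which is immediate from the definition of support and functoriality of $F$.

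The main obstacle I expect is purely bookkeeping: keeping straight which copy of $X$ in $X\coprod X$ is ``fixed'' and which is ``filtered'', and checking that the cardinal arithmetic genuinely lets one absorb a $\kappa$-small set into $X$ via an automorphism of $X\coprod X$ (this uses $|X|\geq\lambda\geq\kappa$ so that $|X|+|K|=|X|$ and, crucially, that removing a $\kappa$-small subset from a copy of $X$ still leaves a copy of $X$ — true since $|X|$ is infinite and $\kappa$-small sets are strictly smaller). There is no real difficulty with $F$ beyond the single application of $\kappa$-filtered colimit preservation in \eqref{colim1}; everything else is naturality of $F$ and elementary manipulation of the support relation.
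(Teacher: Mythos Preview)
Your proposal is correct and follows exactly the paper's argument: bullet one is immediate from the colimit \eqref{colim1}, and for bullet two you pick an isomorphism $X\simeq X\coprod X$, apply bullet one, and then post-compose with an automorphism of $X\coprod X$ sending $X\coprod K$ onto $i_2(X)$. The only wrinkle is the summand bookkeeping you yourself flag (in \eqref{colim1} the \emph{first} copy of $X$ is fixed and $K$ ranges inside the second, so the support is literally the first $X$ together with a $\kappa$-small piece of the second), but this does not affect the argument.
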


\begin{proof}

\begin{itemize}

\item[]

\item This follows immediately from the colimits in (\ref{colim1}).

\item As $X$ is infinite, one can find an isomorphism $\theta_0 :X \simeq X \coprod X$, then because of the colimit (\ref{colim1}), $\theta_0 x $ has support in $X \coprod K$ for some $\kappa$-small subset $K \subset X$. As $X$ has cardinal at least $\lambda$, and $K$ strictly less than $\kappa \leqslant \lambda$, one can then find some automorphisms $\theta_1$ of $X \coprod X$ that send $X \coprod K$ to the the second coproduct inclusion $i_2(X) \subset X \coprod X$ the composite $\theta_1 \theta_0$ has the property required by the lemma.
\end{itemize}
\end{proof}

\begin{lemma}
\label{Lemma2}Let $x \in F(X)$ which has support both in $A \subset X$ and $B \subset X$ such that $X =A \cup B$ and $K = A \cap B$ is $\kappa$-small. Then $x$ has support in $K$.
\end{lemma}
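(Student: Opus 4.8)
The plan is to prove directly that $fx=gx$ for any pair of monomorphisms $f,g\colon X\to Y$ with $f_{|K}=g_{|K}$, by connecting $f$ to $g$ through a finite zig-zag $f=h_0,h_1,\dots,h_n=g$ of monomorphisms $X\to Y$ in which each consecutive pair agrees either on $A$ or on $B$; since $x$ has support in both $A$ and $B$, this forces $fx=h_0x=\dots=h_nx=gx$.

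A few reductions clear the ground. If $P:=A\setminus K$ is empty then $A=K$ and there is nothing to prove, and likewise for $Q:=B\setminus K$; so I may assume both non-empty. As $X=K\sqcup P\sqcup Q$ and $|K|<\kappa\leqslant|X|$, one of $P,Q$ has cardinality $|X|$, and by the symmetry of the statement I take $|Q|=|X|$. Since $f$ and $g$ both factor through $f(X)\cup g(X)\subseteq Y$ (still an object of $Set^m_{\geqslant\lambda}$) and $F$ is a functor, it suffices to treat maps into $f(X)\cup g(X)$, so I may assume $|Y|=|X|=:\mu$ (recall $\mu\geqslant\lambda$ is infinite); put $C:=f(K)=g(K)$, a set of size $<\kappa\leqslant\mu$, and $\psi:=f_{|K}=g_{|K}$. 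The book-keeping is that a monomorphism $h\colon X\to Y$ with $h_{|K}=\psi$ is exactly a pair $(\rho,\omega)$ of injections $\rho\colon P\to Y$, $\omega\colon Q\to Y$ whose images, together with $C$, are pairwise disjoint; two such monomorphisms agree on $A$ iff they have the same $\rho$, and agree on $B$ iff they have the same $\omega$. Write $f\leftrightarrow(\rho_f,\omega_f)$ and $g\leftrightarrow(\rho_g,\omega_g)$.

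If $|P|<\mu$ the zig-zag is short. Then $|C\cup\rho_f(P)\cup\rho_g(P)|<\mu=|Y|$, so there is an injection $\omega_1\colon Q\to Y\setminus(C\cup\rho_f(P)\cup\rho_g(P))$, and $(\rho_f,\omega_f)\xrightarrow{A}(\rho_f,\omega_1)\xrightarrow{B}(\rho_g,\omega_1)\xrightarrow{A}(\rho_g,\omega_g)$ — where the label indicates which of $A,B$ the two terms agree on — is a chain of genuine monomorphisms (all the required triples of sets are pairwise disjoint, by the choice of $\omega_1$), which proves the claim.

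The remaining case $|P|=|Q|=\mu=|Y|$ is the main obstacle: here all the sets in play have the full cardinality $\mu$, so $Y$ has no spare room and no one- or three-step interpolation works; the idea is to first detach the images of $P$ and $Q$ from the pattern of $f$, then reattach them to that of $g$, through a suitable intermediate region. Put $S_f:=Y\setminus(C\cup\rho_f(P))$ and $S_g:=Y\setminus(C\cup\rho_g(P))$; since $\omega_f(Q)\subseteq S_f$ has size $\mu$, we have $|S_f|=|S_g|=\mu$. Choose $R_1\subseteq S_f$ with $|R_1|=\mu$ and $|S_f\setminus R_1|=|S_g\setminus R_1|=\mu$ — take a ``half'' of $S_f\cap S_g$ if this set has size $\mu$, and otherwise a ``half'' of $S_f\setminus S_g$, which then has size $\mu$ — and let $\rho_1,\omega_1,\omega_2$ be bijections of $P$ onto $R_1$, of $Q$ onto $S_f\setminus R_1$, and of $Q$ onto $S_g\setminus R_1$. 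Then
\[
(\rho_f,\omega_f)\xrightarrow{A}(\rho_f,\omega_1)\xrightarrow{B}(\rho_1,\omega_1)\xrightarrow{A}(\rho_1,\omega_2)\xrightarrow{B}(\rho_g,\omega_2)\xrightarrow{A}(\rho_g,\omega_g)
\]
is the desired zig-zag; one checks stage by stage that at each term the image of $\rho$, the image of $\omega$ and $C$ are pairwise disjoint, so that each term really is a monomorphism $X\to Y$. The only real labour is this verification, which goes through precisely because $\mu$ is infinite, so that sets of size $\mu$ can be split in half.
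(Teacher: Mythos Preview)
Your proof is correct and follows the same strategy as the paper's: connect $f$ to $g$ by a finite chain of monomorphisms $X\to Y$ in which consecutive terms agree on $A$ or on $B$. The execution differs only in organisation: you first reduce to $|Y|=|X|$ (by factoring through $f(X)\cup g(X)$), which streamlines the cardinality bookkeeping, and you treat the asymmetric case $|P|<\mu$ separately, whereas the paper's write-up tacitly works under $|A'|=|B'|=\mu$ (which is all that is needed for its single application). In the main case both arguments hinge on the same dichotomy on $|S_f\cap S_g|$ to manufacture an intermediate region of size $\mu$ into which $P$ and $Q$ can be rerouted.
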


The fact that ``supports intersect'' seems to be true much more generally, but the proof of this would involve a painful case distinction on the various size of $A$,$B$ there intersection and the complement of the union. so I decided to focus on the case which is useful to us. I haven't really checked if all the cases in the more general situation work, but this is strongly expected.

\begin{proof}

Let $A' = A-K$ and $B'=B-K$ so that $X = A' \coprod B' \coprod K$. We start with two monomorphisms $f,g:X \rightrightarrows Y$ such that $f_{|K} = g_{|K}$, we will gradually replace $f$ and $g$ by new monomorphisms $X \rightarrow Y$ which agree with the previous ones either on $A$ or on $B$ (all called $f$ or $g$ for simplicity) ,hence, such that the value of $fx$ and $gx$ remains unchanged, and at the end we will have $f = g$. This will prove that $fx = gx$ for the original $f$ and $g$ and hence that $x$ indeed has support in $K$ as claimed.

Let $\mu$ be the cardinality of $X$.
First, one modifies $f$ and $g$ to make sure that both avoid some subsets $S_f$ and $S_g$ of $Y$ of cardinality $\mu$. This can be done by modifying $f$ and $g$ only on $A'$, by shrinking the image $f(A')$ to a subset $V \subset f(A')$ such that $|V|=|f(A') - V| =\mu$ and letting $S_f = f(A')-V$, and similarly for $g$.

If $S_f \cap S_g$ is of cardinality $\mu$, then we redefine $S_f = S_g = S_f \cap S_g$. If $S_f \cap S_g$ is of cardinality strictly less than $\mu$ then we redefine $S_f := S_f -(S_f \cap S_g)$ and $S_g := S_g - (S_f \cap S_g)$. In both case $f$ and $g$ avoids respectively $S_f$ and $S_g$ both of cardinality $\mu$ and either $S_f = S_g$ or $S_f \cap S_g =\emptyset$. In the first case one can fix a monomorphism $i:A' \coprod B' \rightarrow S_f = S_g$ and make both (the restriction of) $f$ and $g$ equal to it in two steps for each: one modifies $f$ on $A'$ to make it equal to $i_{|A'}$, as $f$ avoids $S_f$ the resulting maps is still a mono, and then one modifies it similarly on $B'$, and one do the same for $g$. At this points $f=g$ on $A'$, $B'$ and $K$, so $f=g$ and the proof is done.

In the case $S_f \cap S_g = \emptyset$ , one fixes a monomorphism $i:S_g \rightarrow S_f$ and one modifies $f$ so that it avoids $S_g$ instead of $S_f$: first all the elements of $A'$ that were sent to $S_g$ are sent to their image by $i$ instead, and then one do the same for the elements of $B'$ in a second step. At this point $f$ and $g$ both avoid $S_g$ and we are brought back to the previous case.
\end{proof}

\begin{npar}
One can now easily prove \cref{finitesupport}: Starting from $x \in F(X)$ then, because of the second point of \cref{Lemma1}, it is isomorphic to a $x' \in F(X \coprod X)$ by a $\theta:X \simeq X \coprod X$ and $x'$ is supported on the second component. By the first point of \cref{Lemma1} one also has that $x'$ is supported in $X \coprod K$ for some $K$ finite. Hence by \cref{Lemma2} it is supported in $K$. It follows that $x$ has support in $\theta^{-1} K$ which is also finite.
\end{npar}

\begin{cor}
\label{Cor:Sec3main}The functor:
\[R_{\bullet} : (Set^m_{< \kappa})^{op} \rightarrow \Scott_{\kappa} Set^m_{\geqslant \lambda} \]
is fully faithful and dense. The induced topology on it is the atomic topology (every non-empty sieve is a cover), hence this induces an equivalence:
\[\Scott_{\kappa} Set^m_{\geqslant \lambda} \overset{\simeq}{\rightarrow} Sh( (Set^m_{< \kappa})^{op} , J_{at} ) \]

Finally if $\lambda \leqslant \lambda'$, the inclusion $i: Set^m_{\geqslant \lambda'} \rightarrow  Set^m_{\geqslant \lambda}$ is compatible this equivalence, i.e. on has commutative triangle of equivalences:
\[\begin{tikzcd}[ampersand replacement=\&]
\Scott_{\kappa} Set^m_{\geqslant \lambda'} \arrow{r}{\Scott_{\kappa} i }[swap]{\simeq} \arrow{dr}[swap]{\simeq} \& \Scott_{\kappa} Set^m_{\geqslant \lambda} \arrow{d}{\simeq} \\
 \& Sh( (Set^m_{< \kappa})^{op} , J_{at} ) 
\end{tikzcd}\]
\end{cor}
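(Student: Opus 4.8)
The plan is to establish the three assertions in order; the only substantial work is in the first two, after which the sheaf equivalence is standard topos theory and the compatibility with $\lambda\leqslant\lambda'$ is formal.

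\emph{Full faithfulness of $R_\bullet$.} Fix, for each $V\in Set^m_{<\kappa}$, a monomorphism $\tau_V:V\hookrightarrow\lambda$. For $V,W\in Set^m_{<\kappa}$, \cref{lemma:morphismOutOfR_V} applied to $\Fcal=R_W$ identifies $Hom(R_V,R_W)$ with the set of monomorphisms $\sigma:W\hookrightarrow\lambda$ having support in $\tau_V(V)$. I would show that such a $\sigma$ has support in $\tau_V(V)$ exactly when $\mathrm{im}(\sigma)\subseteq\tau_V(V)$: ``$\Leftarrow$'' is immediate from the definition of support, and for ``$\Rightarrow$'' one picks $w_0\in W$ with $\sigma(w_0)\notin\tau_V(V)$ and uses the identity of $\lambda$ together with a permutation of $\lambda$ fixing $\tau_V(V)$ pointwise but moving $\sigma(w_0)$ (available since $|\lambda\setminus\tau_V(V)|=\lambda$) to witness that $\sigma$ is not supported in $\tau_V(V)$. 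Since $\tau_V$ is mono, $\{\sigma:\mathrm{im}(\sigma)\subseteq\tau_V(V)\}$ is in bijection with $Hom_{Set^m_{<\kappa}}(W,V)=Hom_{(Set^m_{<\kappa})^{op}}(V,W)$, and a quick check identifies this with the map induced by $R_\bullet$.

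\emph{Density.} This means the nerve $N:\Scott_\kappa Set^m_{\geqslant\lambda}\to Prsh((Set^m_{<\kappa})^{op})$, $\Fcal\mapsto Hom(R_\bullet,\Fcal)$, is fully faithful; by \cref{lemma:morphismOutOfR_V}, $N\Fcal(V)=\{x\in\Fcal(\lambda):x\text{ has support in }\tau_V(V)\}$. The heart of the matter — and the step I expect to be the main obstacle — is the claim: ($\ast$) for every $S\in Set^m_{\geqslant\lambda}$ and every $y\in\Fcal(S)$ there are $V\in Set^m_{<\kappa}$, a morphism $u:R_V\to\Fcal$ and a monomorphism $m:V\hookrightarrow S$ with $y=u_S(m)$. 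To prove ($\ast$): by \cref{finitesupport} $y$ has support in some $\kappa$-small $K\subseteq S$; choose $\Lambda\subseteq S$ with $K\subseteq\Lambda$ and $|\Lambda|=\lambda$, and write $S$ as the $\kappa$-directed union $S=\mathrm{colim}_C(\Lambda\sqcup C)$ over the $\kappa$-small $C\subseteq S\setminus\Lambda$ (directed because $\kappa$ is regular, and each $\Lambda\sqcup C\in Set^m_{\geqslant\lambda}$), so that $y=\Fcal(T\hookrightarrow S)(\bar y)$ for some $\bar y\in\Fcal(T)$ with $T=\Lambda\sqcup C_0$ of cardinality $\lambda$. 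Now $\bar y$ has support in some $\kappa$-small $K_1\subseteq T$; choosing $V$ with a bijection $m_0:V\xrightarrow{\sim}K_1\hookrightarrow T$ and — using $|T|=\lambda$ and $|V|<\kappa\leqslant\lambda$ — a \emph{bijection} $\widetilde m:\lambda\to T$ with $\widetilde m\circ\tau_V=m_0$, one sets $x:=\Fcal(\widetilde m^{-1})(\bar y)\in\Fcal(\lambda)$, which has support in $\tau_V(V)$ and hence, via \cref{lemma:morphismOutOfR_V}, corresponds to a morphism $u:R_V\to\Fcal$ with $u_T(m_0)=\bar y$; then $y=u_S(m)$ for $m$ the composite $V\xrightarrow{\sim}K_1\hookrightarrow T=\Lambda\sqcup C_0\hookrightarrow S$. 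Granting ($\ast$): $N$ is faithful, since $N\phi=N\psi$ forces $\phi_\lambda=\psi_\lambda$ (by \cref{finitesupport} every element of $\Fcal(\lambda)$ lies in some $N\Fcal(V)$), and ($\ast$) then propagates the equality to all $\Fcal(S)$ via $\phi_S(u_S(m))=(\phi u)_S(m)$; and $N$ is full, since a natural $\alpha:N\Fcal\to N\Gcal$ determines $\phi_\lambda$ on each $N\Fcal(V)$ — well-definedness uses that two $\kappa$-small supports of an element are both contained in their ($\kappa$-small) union, plus naturality of $\alpha$ — after which ($\ast$) forces $\phi$ on all of $\Fcal$, its naturality, and $N\phi=\alpha$.

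\emph{The sheaf equivalence and the topology.} Being fully faithful and dense, $R_\bullet$ exhibits $\Scott_\kappa Set^m_{\geqslant\lambda}$ — which we already know to be a Grothendieck topos, indeed a $\kappa$-topos — as equivalent, via $N$, to $Sh((Set^m_{<\kappa})^{op},J)$ for the topology $J$ induced from the canonical topology, i.e.\ one in which a sieve covers iff its image under $R_\bullet$ is an epimorphic family; this is the standard recovery of a topos from a dense subsite (see e.g.\ \cite{maclane1992sheaves} or \cite{borceux3}). To see $J=J_{at}$, note that a non-empty sieve on $V$ contains a morphism corresponding to a monomorphism $j_0:V\hookrightarrow U_0$ of $\kappa$-small sets, and is closed under further post-composition, so it suffices that each $R_{j_0}:R_{U_0}\to R_V$ (given at stage $S$ by $m\mapsto m\circ j_0$) be an epimorphism; since colimits, hence images, in $\Scott_\kappa Set^m_{\geqslant\lambda}$ are computed objectwise, this is the surjectivity of $\{\text{monos }U_0\hookrightarrow S\}\to\{\text{monos }V\hookrightarrow S\}$, which holds because any monomorphism $n:V\hookrightarrow S$ can be extended along $j_0$ to a monomorphism $U_0\hookrightarrow S$: indeed $|U_0\setminus j_0(V)|<\kappa\leqslant\lambda\leqslant|S\setminus n(V)|$. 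The empty sieve is not covering (its image $\emptyset\to R_V$ is not epi, as $R_V(\lambda)\neq\emptyset$), so $J=J_{at}$ and $N:\Scott_\kappa Set^m_{\geqslant\lambda}\xrightarrow{\simeq}Sh((Set^m_{<\kappa})^{op},J_{at})$.

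\emph{Compatibility with $\lambda\leqslant\lambda'$.} By the explicit description of the adjunction $\Scott_\kappa\dashv\Pt_\kappa$, the inverse image of the $\kappa$-geometric morphism $\Scott_\kappa i:\Scott_\kappa Set^m_{\geqslant\lambda'}\to\Scott_\kappa Set^m_{\geqslant\lambda}$ is the restriction functor $\rho:\Fcal\mapsto\Fcal\circ i$, and $\rho$ sends $R_V$ (over $Set^m_{\geqslant\lambda}$) literally to $R_V$ (over $Set^m_{\geqslant\lambda'}$). Moreover both nerve equivalences $N_\lambda$ and $N_{\lambda'}$ send $R_V$ to the image of $V$ under the canonical functor $(Set^m_{<\kappa})^{op}\to Sh((Set^m_{<\kappa})^{op},J_{at})$, naturally in $V$ (this uses the full faithfulness of $R_\bullet$ just established). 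Since $\rho$, $N_\lambda$ and $N_{\lambda'}$ all preserve colimits and every object of either side is the canonical colimit of the $R_V$'s, respectively of the representable sheaves, mapping to it, we get $N_\lambda\simeq N_{\lambda'}\circ\rho$ — which is exactly the commutativity of the asserted triangle, and in particular shows $\Scott_\kappa i$ is an equivalence.
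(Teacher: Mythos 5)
Your proposal is correct and follows essentially the same route as the paper: full faithfulness via \cref{lemma:morphismOutOfR_V} and the observation that support in $\tau(V)$ means image in $\tau(V)$, density via \cref{finitesupport} combined with \cref{lemma:morphismOutOfR_V}, the atomic topology via extendability of monomorphisms into sets of cardinality $\geqslant\lambda$, the comparison lemma for the sheaf equivalence, and compatibility via the fact that restriction along $i$ carries $R_V$ to $R_V$. The only difference is that you unwind the comparison-lemma/density bookkeeping by hand where the paper simply cites it.
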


\begin{proof}

Applying \cref{lemma:morphismOutOfR_V} to morphisms from $R_V$ to $R_W$, and fixing some $\tau : V \rightarrow \lambda$:
\[ Hom(R_V, R_W) \overset{\simeq}{\rightarrow}  \{ x \in R_W(\lambda) | x \text{ has support in $\tau(V)$} \}\]
with the map being evaluation at $\tau$. An $x \in R_W (\lambda)$ is a monomorphisms $W \rightarrow \lambda$ and it has support in $\tau(V)$ if and only if its image is included in $\tau(V)$, so this shows that $Hom(R_V,R_W)$ is in bijection with monomorphisms from $W$ to $V$, with the bijection being simply given by the functoriality of $R_{\bullet}$, i.e. $R_{\bullet}$ is fully faithful.

The density of $R_{\bullet}$ is obtained by combining \cref{finitesupport} with \cref{lemma:morphismOutOfR_V}: for any $\Fcal \in \Scott_{\kappa}(Set^{m}_{\geqslant \lambda}$ any element $x \in \Fcal(X)$ is the image of a $x_0 \in \Fcal(\lambda)$ by some maps $\lambda \rightarrow X$ as $X$ is the $\kappa$-directed colimits of its subobject of cardinality $\kappa$. Then $x_0$ has $\kappa$-small support by \cref{finitesupport}, so one can construct a morphism $R_V \rightarrow \Fcal$ which has $x_0$ (and hence $x$) in its image. This shows that any object admits a covering (in the canonical topology of the topos) by the $R_V$.

Note that for any monomorphisms $V \hookrightarrow W$, the induced map $R_W \rightarrow R_V$ is an epimorphism: indeed any monomorphism $V \rightarrow X$ with $X$ of cardinality greater than $\lambda$ can be extended to $W$. So the induced topology on $(Set^m_{< \kappa})^{op}$ is the atomic topology (every non-empty sieve is a cover).

At this point Grothendieck comparison lemma implies that the functor
\[\Scott_{\kappa} Set^m_{\geqslant \lambda} \rightarrow Sh( (Set^m_{< \kappa})^{op} , J_{at} ) \]
sending any object $\Fcal$ to the (pre)sheaf $V \mapsto Hom(R_V, \Fcal)$ is an equivalence of categories. For $\lambda \leqslant \lambda'$ the functor:
\[ \Scott_{\kappa} Set^m_{\geqslant \lambda'} \overset{\Scott_{\kappa} i}{\rightarrow} \Scott_{\kappa} Set^m_{\geqslant \lambda} \]
sends the $R_V$ defined in  $\Scott_{\kappa} Set^m_{\geqslant \lambda'}$ to those defined in  $\Scott_{\kappa} Set^m_{\geqslant \lambda}$, and the description of morphisms between $R_V$ and another object $\Fcal$ given in \cref{lemma:morphismOutOfR_V} shows that the triangle of functor in the statement commutes and hence $\Scott_{\kappa} i $ is indeed an equivalence.
\end{proof}

\section{Generalizations}

\label{sec:generalizations}

The proofs above generalize easily to other categories than the category of sets, for examples to vector spaces (using dimension instead of cardinal) and algebraically closed fields (using transcendence degree). But it is quite unclear to me, what are the assumptions needed in general to make this proof works. They seem to involve both a good notion of dimension and some sort of Fraïssé theory available.

\begin{npar}
For example, I claim that all steps of the proof above generalizes to the following claims:

Let $K$ be any field and consider the category $A_{\lambda} = K-Vect^m_{\geqslant \lambda}$ of $K$-vector space of dimension at least $\lambda$ and linear monomorphism between them. Then following the same steps as the proof above allows to shows that:
\[ \Scott_{\kappa} A_{\lambda} \simeq Sh((K-Vect^m_{< \kappa})^{op},J_{At}) \]
where $K-Vect^m_{< \kappa}$ is the category of $K$-vector space of dimension smaller than $\kappa$. Similarly one deduce that the map:
\[ \Scott_{\kappa} A_{\lambda} \rightarrow \Scott_{\kappa} A_{\kappa}\]
is an equivalence. And similarly to \cref{Cor:MainFinitary} one concludes that the category $ K-Vect^m_{\geqslant \lambda}$ is not the category of $\kappa$-points of a $\kappa$-topos for any regular $\kappa< \lambda$. So is not the category of models of a $L_{\infty,\kappa}$-theory.

\end{npar}

\begin{npar}
Similarly, if $A$ denotes the category of algebraic closed fields of some fixed characteristic, and $A_{\lambda}$ the full subcategory of fields that have transcendence degree at least $\lambda$ over their prime subfield, then one also get isomorphisms for any $\lambda \geqslant \kappa$ :
\[  \Scott_{\kappa} A_{\lambda} \simeq Sh((ACF_{< \kappa})^{op},J_{At}) \]
Where $ACF_{< \kappa}$ is the full subcategory of algebraic closed fields of the same characteristic as above whose transcendence degree over their prime subfields is strictly smaller than $\kappa$. And as above, one deduce that the inclusion $A_{\lambda} \rightarrow A_{\kappa}$ induces an equivalence of categories:
\[ \Scott_{\kappa} A_{\lambda} \overset{\simeq}{\rightarrow} \Scott_{\kappa} A_{\kappa} \]
and that $A_{\lambda}$ cannot be the category of $\kappa$-points of a $\kappa$-topos nor the category of models of a $L_{\infty,\kappa}$-theory for any regular $\kappa < \lambda$.

\end{npar}

\bibliography{Biblio}{}
\bibliographystyle{plain}

\end{document}